\newtheorem{thm}{Theorem}
\newtheorem{lem}[thm]{Lemma}
\newtheorem{cor}[thm]{Corollary}
\newtheorem{prop}[thm]{Proposition}
\newtheorem{problem}[thm]{Problem}
\newtheorem{claim1}{Claim}
\begin{document}

\title{A dichotomy for the kernel by $H$-walks problem in digraphs}

\author{Hortensia Galeana-S\'anchez\thanks{email: hgaleana@matem.unam.mx} \\ {\small Instituto de Matem\'aticas \\ Universidad Nacional Aut\'onoma de M\'exico} \and
C\'esar Hern\'andez-Cruz\thanks{email: cesar@matem.unam.mx (Corresponding Author)} \\ {\small Instituto de Matem\'aticas \\ Universidad Nacional Aut\'onoma de M\'exico} \and
}

\maketitle
\begin{abstract}
Let $H = (V_H, A_H)$ be a digraph which may
contain loops, and let $D = (V_D, A_D)$ be a
loopless digraph with a coloring of its arcs $c:
A_D \to V_H$.   An $H$-walk of $D$ is a walk
$(v_0, \dots, v_n)$ of $D$ such that $(c(v_{i-1},
v_i), c(v_i, v_{i+1}))$ is an arc of $H$, for every
$1 \le i \le n-1$.   For $u, v \in V_D$, we say that
$u$ reaches $v$ by $H$-walks if there exists
an $H$-walk from $u$ to $v$ in $D$.   A subset
$S \subseteq V_D$ is a kernel by $H$-walks of
$D$ if every vertex in $V_D \setminus S$
reaches by $H$-walks some vertex in $S$, and
no vertex in $S$ can reach another vertex in
$S$ by $H$-walks.

A panchromatic pattern is a digraph $H$ such
that every arc-colored digraph $D$ has a kernel
by $H$-walks.   In this work, we prove that every
digraph $H$ is either a panchromatic pattern, or
the problem of determining whether an arc-colored
digraph $D$ has a kernel by $H$-walks is
$NP$-complete.
\end{abstract}

\section{Introduction} \label{sec:Intro}

Sands, Sauer and Woodrow proved in
\cite{SSW} a beautiful result stating that
every digraph whose arcs are colored
with two colors has a kernel by
monochromatic paths.   Since then, the
existence of kernels by monochromatic
paths has been studied both in general
digraphs \cite{GS2,GS3,GS-L-MB} and
in tournaments \cite{GS,HIW,S}.

Linek and Sands generalized in
\cite{LS} the notion of kernel by
monochromatic paths in the following
way.   If $H$ is a digraph, possibly
with loops, we will say that the digraph
$D$ is $H$-arc-colored if the set
of arcs of $D$ has been colored
with the vertices of $H$; we will
usually denote this coloring with $c$.
An {\em $H$-walk} is a walk $(v_0,
\dots, v_n)$ in $D$ such that $(c(v_i,
v_{i+1}), c(v_{i+1}, v_{i+2}))$ is an
arc of $H$; we say that $v_0$ {\em
reaches} $v_n$ {\em by $H$-walks}.
A {\em kernel by $H$-walks} of $D$
is a subset $S$ of $V_D$ such that
it is {\em independent by $H$-walks}
(there are no $H$-walks between
vertices in $S$) and {\em absorbent
by $H$-walks} (for every vertex $u$
in $V_D \setminus S$ there is a vertex
$v$ in $S$ such that $u$ reaches $v$
by $H$-walks).

If $V_H = \{ x, y \}$ and $A_H = \{ (x,x),
(y,y) \}$, then a kernel by $H$-walks
in a digraph $D$ is simply a {\em kernel
by monochromatic paths} (observe
that every monochromatic walk contains
a monochromatic path), and hence,
the aforementioned theorem of Sands,
Sauer and Woodrow states that every
$H$-arc-colored digraph has a kernel
by $H$-walks (for this particular
choice of $H$).   In this context, a very
natural question arises.   Which are
the digraphs $H$ such that every
$H$-arc-colored digraph has a kernel
by $H$-walks?   Arpin and Linek
stated this question in \cite{AL}, and
found some digraphs having this
property, as well as some digraphs
not having this property.   A {\em
panchromatic pattern} is a digraph
$H$ such that every $H$-arc-colored
digraph has a kernel by $H$-walks.
Based on the work of Arpin and Linek,
Galeana-S\'anchez and Strausz
characterized all the panchromatic
patterns in \cite{GS-S}.   Again, a
natural question arose.   How hard
is to determine the existence of a
kernel by $H$-walks if $H$ is not a
panchromatic pattern? For a digraph
$H$, define the {\em kernel by
$H$-walks problem} to be the
decision problem of determining
whether a digraph $D$ has a kernel
by $H$-walks. Considering that $H$
can be any digraph, and the
panchromatic patterns are a very
restricted class, it is natural to think
that there are some digraphs $H$
having a polynomial time solvable
kernel by $H$-walks problem, as
well as some digraphs $H$ having an
$NP$-complete kernel by $H$-walks
problem. It really comes as a surprise
that the former case never happens.
The following theorem is the main
result of this work.

\begin{thm} \label{mainthm}
If $H$ is a digraph, then either $H$ is a
panchromatic pattern (and hence the kernel
by $H$-walks problem is constant time
solvable), or the kernel by $H$-walks
problem is $NP$-complete.
\end{thm}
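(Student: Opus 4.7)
The plan is to split into the two halves of the dichotomy. If $H$ is a panchromatic pattern, every $H$-arc-colored digraph has a kernel by $H$-walks by definition, so the decision problem is constant-time solvable (the answer is always ``yes''). The non-trivial half is to show $NP$-completeness whenever $H$ is not panchromatic. Membership in $NP$ is routine: given a candidate $S \subseteq V_D$, absorbency and independence by $H$-walks can be verified by running reachability in an auxiliary digraph on the arc set of $D$, in which $(a,b)$ becomes an arc whenever $b$ follows $a$ in $D$ and $(c(a),c(b)) \in A_H$; this reduces $H$-walks in $D$ to directed walks in the auxiliary digraph and runs in polynomial time.

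For $NP$-hardness, I would lean on the Galeana-S\'anchez--Strausz classification of panchromatic patterns \cite{GS-S}: whenever $H$ is not panchromatic, there exists a small $H$-arc-colored digraph $D^*$ with no kernel by $H$-walks. I would fix such a $D^*$ and use it as a \emph{poison gadget} in a reduction from $3$-SAT. Each variable $x_i$ contributes a two-vertex choice gadget whose vertices are joined by colored arcs engineered so that any kernel by $H$-walks contains exactly one of them, thereby encoding the truth value of $x_i$. Each clause $C$ contributes a clause gadget containing a distinguished vertex $b_C$ to which a fresh copy of $D^*$ is attached. The colorings are arranged so that if at least one literal of $C$ has been selected into the kernel, then the attached copy of $D^*$ together with $b_C$ is absorbed by $H$-walks into the selected literal; whereas if no literal of $C$ is selected, that copy of $D^*$ must supply its own kernel locally, which is impossible by the defining property of $D^*$. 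A kernel of the whole digraph then exists if and only if the formula is satisfiable.

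The main obstacle is uniformity: the vertices of $H$ available as colors and the allowed transitions between them vary wildly among non-panchromatic $H$, so the choice and poison gadgets cannot be given by a single formula. To deal with this I would go through the Galeana-S\'anchez--Strausz classification case by case, in each case pinpointing the concrete substructure of $H$ responsible for non-panchromaticity (for example a vertex with an out-neighbor but no loop, or a specific forbidden two-arc pattern) and showing how the colors associated with it can be used to realize the variable and clause gadgets and to hook them onto copies of $D^*$ consistently. Because the dichotomy is a per-$H$ statement the reduction may depend on $H$; what must remain polynomial is only the blowup in the size of the input $3$-CNF formula, which is immediate once each gadget has constant size.
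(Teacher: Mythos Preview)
Your $NP$ membership argument is essentially the paper's (it builds the same auxiliary ``arc digraph'' implicitly in its BFS-style algorithm). The hardness half, however, diverges from the paper both in the reduction source and in the underlying case analysis, and the latter is where your proposal has a genuine gap.

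You propose to ``go through the Galeana-S\'anchez--Strausz classification case by case,'' but that classification only says $H$ is panchromatic iff it is looped and admits an $M_1$- or $M_2$-partition; the negation of that does not by itself hand you a finite list of cases to engineer gadgets for. The paper's key intermediate step (Section~\ref{sec:minobs}) is precisely to convert this into a finite forbidden-subgraph characterization: a looped $H$ fails to be panchromatic iff it contains one of eleven explicit $3$-vertex digraphs $F_1,\dots,F_{11}$. Only with that list in hand does a uniform hardness argument become possible, and the paper then needs just three reductions rather than eleven: (i) if $H$ has a loopless vertex, color every arc with it and reduce trivially from \textsc{Kernel}; (ii) seven of the $F_i$ share the feature ``asymmetric arc $red\to green$, missing arc $red\to blue$,'' and a single arc-subdivision reduction from \textsc{Kernel} handles all of them; (iii) the remaining four, $F_1,F_5,F_7,F_8$, share an independent pair $\{green,blue\}$, and a reduction from $k$-\textsc{Coloring} handles them.

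Your poison-gadget idea is in fact exactly what the paper uses in case~(iii): each vertex $v$ of $G$ gets a green $k$-cycle $C_v$ with a copy of a kernel-free $H$-colored digraph $F$ hanging off it, which forces $|K\cap V(C_v)|=1$ and thereby encodes a color for $v$. So that instinct is right; what your plan is missing is the finite obstruction list that organizes the non-panchromatic $H$ into a handful of structurally uniform cases, and the realization that \textsc{Kernel} (rather than $3$-\textsc{SAT}) is the natural source problem for most of them, since a single loopless or ``bad'' color already lets you simulate ordinary kernels directly.
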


There are only a handful of articles
dealing with the complexity of the
many variations of the kernel problem.
Chv\'atal proved in 1973 \cite{C} that
the kernel problem is $NP$-complete;
in 1981 \cite{F}, Fraenkel proved that
the kernel problem remained
$NP$-complete even when restricted
to planar graphs with $\Delta^+,
\Delta^- \le 2$ and $\Delta \le 3$; in
2014 \cite{HHC1}, Hell and
Hern\'andez-Cruz proved that the
$3$-kernel problem is $NP$-complete,
even when restricted to digraphs
homomorphic to a (directed) $3$-cycle
with circumference $6$, and, as a
consequence, the kernel problem
remains $NP$-complete even when
restricted to $3$-colorable digraphs.
On the other hand, Bang-Jensen, Guo,
Gutin an Volkmann proved in 1997
\cite{BJGGV} that the kernel problem
is polynomial time solvable for locally
semicomplete digraphs; in \cite{HHC1}
it is proved that the $3$-kernel
problem is polynomial time solvable
for semicomplete multipartite digraphs.

This is the first work dealing with the
complexity of a generalization of the
kernel problem different from the
$3$-kernel problem.   In fact, although
the kernel by monochromatic paths
has been widely studied, its complexity
remained unknown until now.

We refer the reader to \cite{BJD} and \cite{BM}
for general concepts.   Let $D = (V_D, A_D)$ be
a digraph with set of vertices $V_D$ and set of
arcs $A_D$.   A {\em loop} is an arc of the form
$(v,v)$; a vertex $v$ is {\em looped} if $(v,v)$ is
a loop of $D$, and it is {\em loopless} otherwise.
A digraph is {\em looped} ({\em loopless}) if all
its vertices are looped (loopless).

A {\em digon} is an arc $(x,y)$ of $D$ such
that $(y,x)$ is also an arc of $D$; given the
symmetric nature of the definition, we will
sometimes refer to an unordered pair of vertices
$\{x, y\}$ as a digon.   An arc $(x,y)$ of $D$ is
{\em asymmetric} if $(y,x) \notin A_D$. We will 
say that a subset $S$ of $V_D$ is a strong
clique if every pair of vertices in $S$ is a digon.

Given a family $\mathcal{F}$ of digraphs, we
say that a digraph $D$ if {\em $\mathcal{F}$-free}
if no member of $\mathcal{F}$ appears as an
(homomorphic copy of an) induced subdigraph
of $D$.   A property $\mathcal{P}$ of a digraph
$D$ is {\em hereditary} if every induced subdigraph
of $D$ also has the property $\mathcal{P}$.   For
example, the property of being $\mathcal{F}$-free
is a hereditary property.

All walks, paths and cycles are considered
to be directed unless otherwise stated.   The
circumference of a digraph is the length of
its longest cycle, and it is defined to be
zero for acyclic digraphs.   For a positive
integer $k$, a $k$-cycle is a cycle of
length $k$.

The remainder of this work is organized
as follows.   In Section \ref{sec:algorithm}
we show that the kernel by $H$-walks
problem is in the class $NP$, by presenting
a polynomial time algorithm to verify
reachability by $H$-walks.   Section
\ref{sec:minobs} is devoted to obtain a
characterization of panchromatic patterns
in terms of a finite family of forbidden
induced subdigraphs.    In order to prove
$NP$-hardness of the kernel by $H$-walks
problem for every $H$ which is not a
panchromatic pattern, we consider three
cases for $H$; the polynomial reduction
for the most complex case is constructed
in Section \ref{sec:genreduc}.   In Section
\ref{sec:mainres} we deal with the two
remaining cases to complete the proof of
Theorem \ref{mainthm}.   Section
\ref{sec:conclusions} is devoted to present
our concluding remarks and future lines of
work.

\section{A reachability algorithm} \label{sec:algorithm}

In order to prove that the kernel by
$H$-walks problem is in $NP$, it
suffices to show that, given an
instance $(D,H)$ of the problem,
and a subset $S$ of $V_D$, it can
be verified in polynomial time
whether $S$ is a kernel by
$H$-walks of $D$.   It is clear that
this can be achieved through an
algorithm that finds all the vertices
reached by $H$-walks from a given
vertex $v$, in polynomial time.   We
only have to run this algorithm from
every vertex of $D$.

In this section we will propose such
algorithm and prove that it runs in
polynomial time.   The proposed
algorithm is based on BFS.   The
main difference is that, instead of
vertices, our queue will have pairs
$(x,c)$, where $x$ is a vertex and
$c$ is the color of the arc which was
explored in order to reach $x$. Each
of this pairs may join the queue at
most once, but the same vertex
may be considered several times.
This will allow us to know through
which color a vertex was reached,
and thus, to find $H$-walks that use
some vertex several times.   Once
a pair has joined the queue, it will
be painted black, so it will not be
explored again in the future, just as
in BFS.   But, unlike BFS, when we
are considering an out-neighbor
$y$ of a vertex $x$, we will check
whether $y$ is uncolored, and also
if the color $c(x,y)$ of the arc $(x,y)$
is compatible in $H$ with the color
$c$ we used to reach $x$ (when
$(x,c)$ is the current head of the
queue); in other words, we need that
$(c, c(x,y)) \in A_H$.

Now, we present the pseudo-code
for our algorithm.   The only two
structures that we will use are a
queue $Q$ and a set $R$.   The
input will be an $H$-arc-colored
digraph $D(v)$ with a distinguished
vertex $v$, the ``root'' of our search.
We would like to point out that we
are not trying to optimize the running
time of the algorithm, and probably
there are better implementations of
the same idea we are using, but
we tried to keep the algorithm as
clear and simple as possible.

\vspace{0.5cm}

\begin{algorithm}[H] \label{Hreach}
  \DontPrintSemicolon
  \SetKwData{Left}{left}\SetKwData{This}{this}\SetKwData{Up}{up}
  \SetKwFunction{Union}{Union}\SetKwFunction{FindCompress}{FindCompress}
  \SetKwInOut{Input}{input}\SetKwInOut{Output}{output}

  \KwIn{An $H$-arc-colored digraph $D(v)$.}
  \KwOut{The set $R$ of all vertices reached by $H$-walks from $v$}% and a time function $t$.}
  \BlankLine
  {%$i \leftarrow 0$,
  $Q \leftarrow \varnothing$, $R \leftarrow \varnothing$}\;
  {$R \leftarrow R \cup \{ v \}$}\;
  \For{$c \in V_H$}{
  	%$i \leftarrow i + 1$\;
  	color $(v,c)$ black\;
	%$t(v,c) \leftarrow i$\;
  	append $(v,c)$ to $Q$
  }
  \While{$Q$ is nonempty}{
  	consider the head $(x,c)$ of $Q$\;
 	\eIf{$x$ has an out-neighbor $y$ such that $(y,c(x,y))$ is uncolored and $(c,c(x,y)) \in A(H)$}{
		%$i \leftarrow i +1$\;
		color $(y,c(x,y))$ black\;
		$R \leftarrow R \cup \{ y \}$\;
		%$t(y,c(x,y)) \leftarrow i$\;
		append $(y, c(x,y))$ to $Q$\;
	}
	{remove $(x,c)$ from $Q$}
  }
  {return $R$}%$(R,t)$}
  \caption{Reachability by $H$-walks algorithm}\label{reachHwalks}
\DecMargin{1em}
\end{algorithm}

\begin{thm}
Let $D(v)$ be an $H$-arc-colored digraph.   A
vertex $u \in V_D$ is reached from $v$ by
$H$-walks in $D$ if and only if $u \in R$, where
$R$ is the output of Algorithm \ref{Hreach}
\end{thm}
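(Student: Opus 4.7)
My plan is to prove the equivalence by two inductions: one over the execution trace of the algorithm (for the ``if'' direction) and one over the length of a witnessing $H$-walk (for the ``only if'' direction). The pivotal invariant is that the second coordinate $c$ of each queue entry $(x,c)$ records the color of the final arc of some $H$-walk from $v$ to $x$; this is precisely the datum required by the guard $(c, c(x,y)) \in A(H)$ to decide whether such a walk can be extended by the out-arc $(x,y)$ while preserving the $H$-walk property.

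For the soundness direction ($u \in R$ implies $u$ is reached from $v$ by $H$-walks), I would prove by induction on the order of insertion into $Q$ the invariant: whenever $(x,c)$ is appended, either $x = v$ (and the trivial walk witnesses reachability), or there is an $H$-walk from $v$ to $x$ in $D$ whose last arc has color $c$. The inductive step is immediate: if $(y, c(x,y))$ is appended while $(x, c)$ is the head of $Q$, the inductive hypothesis provides an $H$-walk $W$ from $v$ to $x$ ending in color $c$, and the guard $(c, c(x,y)) \in A(H)$ ensures that extending $W$ by the arc $(x, y)$ preserves the $H$-walk condition at the new junction. Since every element of $R$ other than $v$ is added as the first coordinate of some inserted pair, every $u \in R$ is reached from $v$ by $H$-walks.

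For the completeness direction, fix an $H$-walk $W = (v = w_0, w_1, \dots, w_k = u)$ and prove by induction on $i$ that $w_i \in R$ upon termination; more strongly, that for $i \ge 1$ the pair $(w_i, c(w_{i-1}, w_i))$ is eventually colored black. The base $i = 0$ follows from initialization. For $i \ge 1$, the hypothesis places $(w_{i-1}, c(w_{i-2}, w_{i-1}))$ in $Q$ at some point (for $i = 1$, one of the initial pairs $(v, c)$ with $c$ compatible with $c(v, w_1)$ plays this role). When this pair reaches the head of $Q$, the main loop does not remove it until every out-neighbor $z$ of $w_{i-1}$ with $(c(w_{i-2}, w_{i-1}), c(w_{i-1}, z)) \in A(H)$ and $(z, c(w_{i-1}, z))$ uncolored has been processed; the $H$-walk condition $(c(w_{i-2}, w_{i-1}), c(w_{i-1}, w_i)) \in A(H)$ ensures $w_i$ is among the candidates, so $(w_i, c(w_{i-1}, w_i))$ becomes colored black either now or earlier through another route. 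Either way, $w_i \in R$.

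The main obstacle I anticipate is verifying that, once $(x, c)$ becomes the head of $Q$, the main loop indeed examines every compatible uncolored out-neighbor of $x$ before the head is removed, so that no legitimate extension is skipped. This follows from the structure of the main loop, whose second branch (removing the head) fires only when the first branch (which appends a new pair) fails, but it deserves an explicit check. The base case for walks of length zero or one also merits a brief dedicated argument, since the initial pairs $(v, c)$ for $c \in V_H$ are the only entry point for such walks. Neither point is deep, but both require short justifications before the two inductions close cleanly.
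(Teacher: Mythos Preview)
Your proposal is correct and follows essentially the same approach as the paper: both directions are handled by the same two inductions (on order of insertion into $Q$ for soundness, and along a witnessing $H$-walk for completeness), with the identical invariant that the second coordinate of a queued pair records the color of the final arc of some $H$-walk from $v$. Your explicit remark that the head $(x,c)$ is not dequeued until all compatible uncolored out-arcs have been processed is exactly the point the paper uses implicitly when it says the pair is ``considered in step~9''.
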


\begin{proof}
%Let $R'$ be the set of all vertices reached
%from $v$ by $H$-walks in $D$.   We will
%prove by induction on $t$, that $R \subseteq
%R'$.
%
We will use an inductive argument to prove
that every vertex in $R$ is reached from $v$
by $H$-walks in $D$.
Let $u$ be a vertex in $R$.   It follows from
steps $8$ and $9$ of Algorithm \ref{Hreach}
that $(u,c(x,u))$ joined $Q$ while exploring
the pair $(x,c)$, where $x$ is an in-neighbor
of $u$ already in $R$, and such that $(c,
c(x,u)) \in A(H)$.

Assume that, for every pair $(y,c)$ that joined
$Q$ before $(u,c(x,u))$, either $y = v$, or $y$
is reached from $v$ by an $H$-walk, $W$,
such that the color of the last arc of $W$ is $c$. 
Since $(x,c)$ joined $Q$ before $(u,c(x,u))$,
there exists an $H$-walk, $W_x$, from $v$ to
$x$ with the aforementioned property. Clearly
$W_x \cup (x,u)$ is an $H$-walk in $D$ and the
color of its last arc is $c(x,u)$.   The desired
result now follows from the Second Principle
of Mathematical Induction.

It remains to show that every vertex reached
by $H$-walks from $v$ in $D$ belongs to $R$.
Clearly, $v \in R$. Now, let $u$ be a vertex in
$D$ such that $u \ne v$, and an $H$-walk, $W$,
from $v$ to $u$ exists in $D$.   Let $W = ( v = x_0,
\dots, x_n = u)$.   We will prove by induction on
the length of $W$, $\ell (W)$, that $(u,c(x_{n-1},
u))$ is colored black during the running of
Algorithm \ref{Hreach}. If $\ell (W) = 1$, then
$u$ is an out-neighbor of $v$ and the result is
immediate.   Suppose that $\ell (W) = n$.  By the
induction hypothesis, $(x_{n-1}, c(x_{n-2},
x_{n-1}))$ is colored black while Algorithm
\ref{Hreach} is running.   Thus, $(x_{n-1}, c(
x_{n-2}, x_{n-1}))$ is the head of $Q$ at some point,
and the pair $(u, c(x_{n-1}, u))$ is considered
in step $9$.    Since $W$ is an $H$-walk, the
arc $(c(x_{n-2}, x_{n-1}), c(x_{n-1}, u))$ is in
$H$, and hence, $(u,c(x_{n-1},u))$ is explored
(and thus colored black) in this step, unless it
has been previously colored black.   In either
case, the desired pair is colored black, and thus,
it follows from step $11$ in Algorithm \ref{Hreach}
that $u \in R$.
\end{proof}

Let us make a brief running time analysis for
Algorithm \ref{Hreach}.   Steps $1$ and $2$ are
executed a constant number of times.   Steps
$4$ and $5$ are performed once for each
vertex in $H$, this is $o (V_H)$.   For every
arc $(x,y)$, the pair $(y, c(x,y))$ may join $Q$
at most once and, while in $Q$, every arc with
tail $y$ should be considered.   Hence, the
number of times steps $9-15$ are performed
is a linear function of $\sum_{v \in V_D}
d_D^-(v) \cdot d_D^+(v) \le \sum_{v \in V_D}
\Delta_D^- \cdot d_D^+(v) = \Delta_D^- \cdot
|A_D| \le |V_D| \cdot |A_D|$.   Hence, the
running time of Algorithm \ref{Hreach} is
$o(|V_H| + |V_D| \cdot |A_D|)$, which is
polynomial.

Moreover, Algorithm \ref{Hreach} can be
modified to omit the $|V_H|$ from the running
time.   Just add directly $(x,c(v,x))$ to $Q$ for
every out-neighbor of $v$ instead of
performing steps $3-6$; this is done at most
$|V_D|$ times.   Thus, the running time of this
modified version of Algorithm \ref{Hreach} is
$o (|V_D| \cdot |A_D|)$.

As we have discussed at the beginning of
this section, the following result is already
proved.

\begin{cor}
The problem of determining whether an
$H$-arc-colored digraph has a kernel by
$H$-walks is in $NP$.
\end{cor}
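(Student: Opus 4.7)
The plan is to exhibit a polynomial-time verifier in the standard sense of $NP$, using a candidate set $S\subseteq V_D$ as the certificate. The entire strategy rests on the previous theorem, which guarantees that Algorithm \ref{Hreach} correctly computes, in polynomial time, the set $R(v)$ of vertices reachable from $v$ by $H$-walks. Once this reachability oracle is in hand, verifying both defining properties of a kernel by $H$-walks becomes a routine matter of iterating over pairs of vertices.

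Concretely, I would first run Algorithm \ref{Hreach} once from each vertex $v\in V_D$, obtaining the collection $\{R(v):v\in V_D\}$. As noted in the running-time analysis, each call costs $o(|V_D|\cdot |A_D|)$ (in the improved version), so producing all of these sets still takes time polynomial in $|V_D|+|A_D|$. Independence by $H$-walks of $S$ is then checked by verifying, for every ordered pair of distinct $u,w\in S$, that $w\notin R(u)$; this requires at most $|S|^{2}$ membership queries. Absorbency by $H$-walks is checked by verifying, for each $u\in V_D\setminus S$, that $R(u)\cap S\neq \varnothing$; this requires at most $|V_D|\cdot |S|$ lookups. The verifier accepts exactly when both tests succeed, and the overall running time is clearly polynomial in the size of the instance $(D,H)$ plus the certificate $S$.

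There is essentially no conceptual obstacle: the preceding theorem has already done all of the work by establishing soundness and completeness of Algorithm \ref{Hreach}, and the definition of a kernel by $H$-walks translates immediately into the two universally quantified reachability conditions checked above. The only thing to be careful about is to quantify the verification cost using $|V_D|$ applications of the reachability algorithm rather than something more exotic; this keeps the analysis clean and makes the polynomial bound transparent. Hence the problem lies in $NP$.
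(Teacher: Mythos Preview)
Your proposal is correct and follows exactly the approach sketched in the paper: use a candidate set $S$ as the certificate, run Algorithm~\ref{Hreach} from every vertex of $D$, and check independence and absorbency by $H$-walks using the resulting reachability sets. The paper in fact does not give a separate proof of this corollary, merely referring back to the opening discussion of the section, so your write-up is a more detailed version of the same argument.
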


\section{Minimal obstructions for panchromaticity}
\label{sec:minobs}

Given an $m$ by $m$ matrix $M$ over $\{ 0, 1, * \}$, an
$M$-partition of a digraph $D$ is a partition of its vertex set
into parts $V_1, \dots, V_m$ such that each vertex in $V_i$
must (respectively must not) dominate each vertex in $V_j$
if $M_{i,j}=1$ (respectively $M_{i,j}=0$); there are no
restrictions if $M_{i,j} = \ast$.   When $i = j$, $V_i$ is a strong
clique (respectively an independent set) if $M_{i,i} = 1$
(respectively  $M_{i,i} = 0$).   If $M$ is a symmetric matrix, a
similar definition can be given for undirected graphs.   An
excellent survey on the subject of matrix partitions of graphs
and digraphs is due to Hell \cite{survey}.

The $M$-partition problem asks whether a given digraph $D$
admits an $M$-partition.   Particular choices of matrices $M$
yield very well known problems in the undirected graph case,
e.g.,
a $\left(
\begin{array}{cc}
0 & \ast \\
\ast & 0
\end{array}
\right)$-partition is simply a $2$-coloring and a
$\left(
\begin{array}{cc}
0 & \ast \\
\ast & 1
\end{array}
\right)$-partitionable graph is a split graph.

It is easy to observe that having an $M$-partition is a hereditary
property, and hence, it can be characterized through a set of
minimal forbidden induced subdigraphs.   For a matrix $M$, we
define an $M$-obstruction to be any digraph not having an
$M$-partition.   An $M$-obstruction, $D$, is minimal if any
induced proper subdigraph of $D$ has an $M$-partition.
Of course, it is direct to verify that a digraph admits an
$M$-partition if and only if it does not contain any minimal
$M$-obstruction as an induced subdigraph.

Let $M_1$ and $M_2$ be the $2 \times 2$ matrices
$$M_1 =
\left(
\begin{array}{cc}
1 & 1 \\
\ast & 1
\end{array}
\right)
\hspace{2cm}
M_2 =
\left(
\begin{array}{cc}
1 & 0 \\
0 & 1
\end{array}
\right).
$$
Based on the work of Arpin and Linek \cite{AL},
Galeana-S\'anchez and Strausz proved that a digraph
is a panchromatic pattern if and only if it is a looped
$M_1$-partitionable or $M_2$-partitionable digraph,
\cite{GS-S}.   Again, it is easy to observe that having
an $M_1$-partition or an $M_2$-partition is a hereditary
property.   We define a {\em panchromatic obstruction}
to be a digraph having neither an $M_1$-partition nor
an $M_2$-partition. A panchromatic obstruction is {\em
minimal} if every induced subdigraph admits an
$M_1$-partition or an $M_2$-partition.   The aim of this
section is to characterize all the minimal panchromatic
obstructions.   To this end, we will use the characterization
given by Hell and Hern\'andez-Cruz in \cite{HHC} of all the
$M_1$-minimal obstructions and $M_2$-minimal
obstructions.

\begin{figure}
\begin{center}
\includegraphics[width=\textwidth]{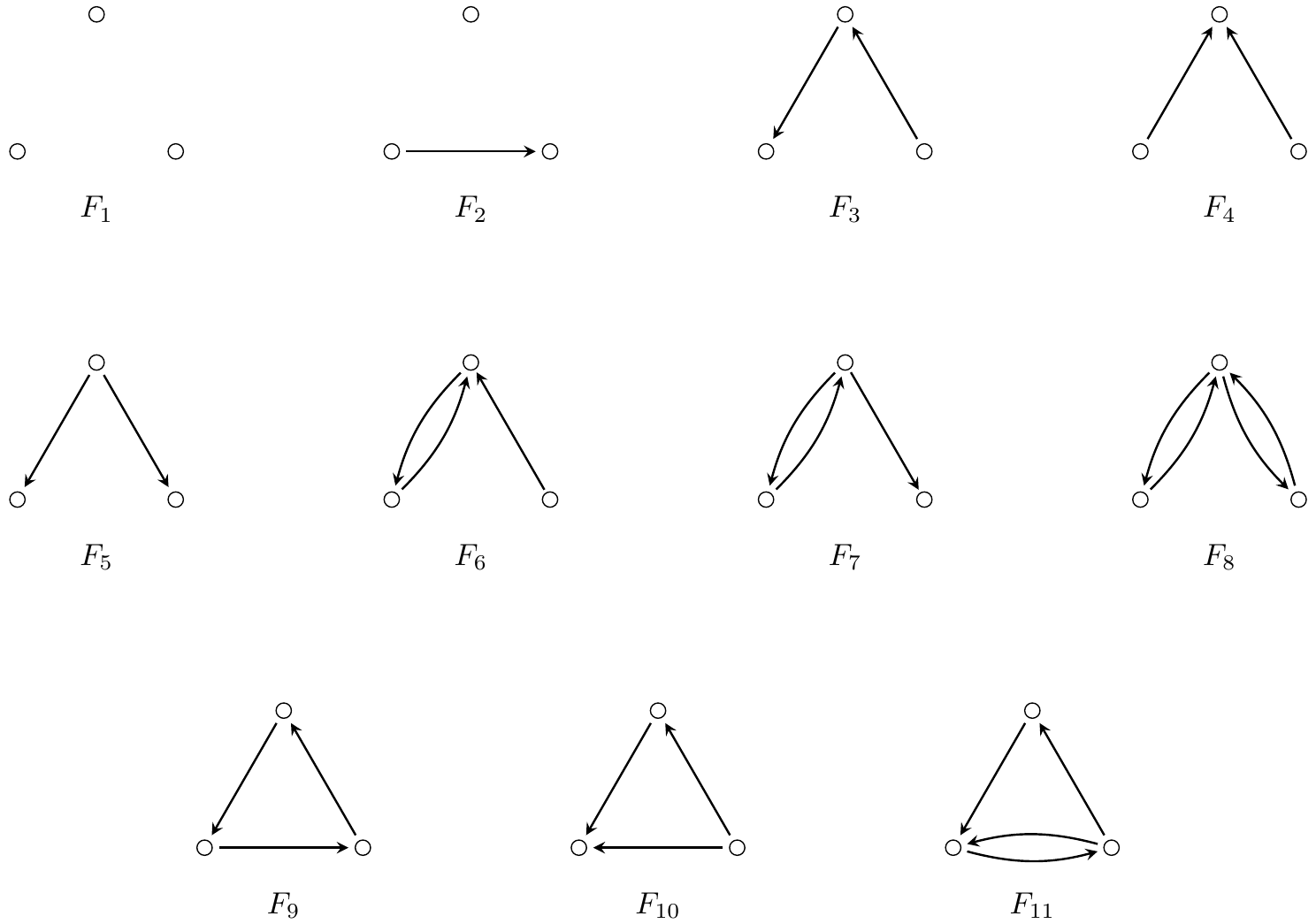}
\caption{Minimal panchromatic obstructions.} \label{M1M2MinObs}
\end{center}
\end{figure}

\begin{thm}
Let $M_1$ and $M_2$ be the $2 \times 2$ matrices defined
above.   Then,
\begin{enumerate}
	\item The minimal $M_1$-obstructions are, an
			independent set of two vertices, and the
			digraphs $F_9, F_{10}$ and $F_{11}$
			in Figure \ref{M1M2MinObs}.
	
	\item The minimal $M_2$-obstructions are, an
			asymmetric arc, and the digraphs $F_1$ and
			$F_8$ in Figure \ref{M1M2MinObs}.
\end{enumerate}
\end{thm}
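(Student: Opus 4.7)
The plan is to handle the two items in parallel, since each follows the same two-step pattern: first show that each listed digraph is indeed a minimal obstruction, and then show that these exhaust all minimal obstructions. For the first step, I would unpack the definitions---an $M_1$-partition splits $V_D$ into two strong cliques $V_1,V_2$ with every vertex of $V_1$ dominating every vertex of $V_2$, while an $M_2$-partition requires two strong cliques with no arcs between them---and then, for each candidate in Figure \ref{M1M2MinObs}, run a small case analysis on which part each vertex belongs to. Minimality is verified by exhibiting an explicit $M_i$-partition after deleting any single vertex. Since the listed obstructions have few vertices, this step is routine.

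The main work lies in the converse: any minimal $M_i$-obstruction must coincide with one of the listed digraphs. For item 2 the argument is cleaner: an asymmetric arc is immediately an obstruction (both endpoints would have to land in the same part, forcing a digon, or in different parts, forbidden by the two zeros), so we may assume $D$ has no asymmetric arcs; the digon-structure may then be viewed as an undirected graph, and the problem reduces to partitioning its vertex set into at most two cliques, whose classical minimal obstructions are $P_3$ and $3K_1$. For item 1 the asymmetric $\ast$ entry of $M_1$ makes the analysis more delicate: starting with a minimal obstruction $D$, one fixes a tentative location in $V_1$ or $V_2$ for each vertex, derives a contradiction in each branch, and tracks which small induced subdigraphs block every branch simultaneously; the outcome is that $D$ must be the independent pair or contain one of $F_9, F_{10}, F_{11}$ as induced subdigraph, and by minimality it must actually equal such a digraph.

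The main obstacle is this last step---the exhaustive local analysis for $M_1$, where the single $\ast$ entry permits many configurations that must be ruled out systematically. Fortunately, this analysis is the content of \cite{HHC}, so the proof I would write essentially quotes the relevant classification from that reference and matches its obstructions to the digraphs $F_1, F_8, F_9, F_{10}, F_{11}$ of Figure \ref{M1M2MinObs}, supplemented by the short direct verification that the independent pair of vertices and the asymmetric arc are themselves minimal obstructions for $M_1$ and $M_2$ respectively.
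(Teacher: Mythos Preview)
The paper does not prove this theorem at all: it is simply quoted as the characterization ``given by Hell and Hern\'andez-Cruz in \cite{HHC}'', with no argument supplied. Your proposal is therefore already more detailed than what the paper offers, and it is correct. In particular, your reduction for $M_2$---observing that in the absence of asymmetric arcs the digon relation yields an undirected graph, and that an $M_2$-partition is then exactly a partition into at most two cliques with no edges between them, whose minimal obstructions are $P_3$ and $3K_1$---is a clean self-contained argument that the paper does not give. For $M_1$ you defer to \cite{HHC}, which is precisely what the paper does as well. So your approach subsumes the paper's: same reliance on \cite{HHC} for the delicate $M_1$ case, plus an explicit elementary treatment of the $M_2$ case.
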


Let $\mathcal{F}$ be the set $\{F_i\}_{i=1}^{11}$,
depicted in Figure \ref{M1M2MinObs}.
Observe that the digraphs in $\mathcal{F}$ are
minimal panchromatic obstructions.   Indeed, the
digraphs $F_9, F_{10}$ and $F_{11}$ are
minimal $M_2$-obstructions containing at least
one asymmetric arc, which makes them also
$M_2$-obstructions.   Similarly, $F_1$ is a
minimal $M_2$-obstruction containing an
independent set of size $2$, which makes it an
$M_1$-obstruction.   Each of the remaining
digraphs in $\mathcal{F}$ contains an
asymmetric arc and an independent set of
cardinality $2$, and hence is both an
$M_1$-obstruction and an $M_2$-obstruction.
These obstructions are minimal since
any digraph on $2$ vertices is either
$M_1$-partitionable or $M_2$-partitionable.
Hence, every digraph in $\mathcal{F}$ is a
minimal panchromatic obstruction.   Our next
result shows that these are the only minimal
panchromatic obstructions.

\begin{thm} \label{panchar}
If $D$ is a digraph, then $D$ admits an $M_1$-partition
or an $M_2$-partition if and only if it is $\mathcal{F}$-free.
\end{thm}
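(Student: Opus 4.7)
The forward implication is immediate: both the $M_1$-partition and the $M_2$-partition properties are hereditary, so if $D$ admits one of them then every induced subdigraph of $D$ does as well; since each $F_i\in\mathcal{F}$ was observed above to admit neither, $D$ cannot contain any $F_i$ as an induced subdigraph.

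For the converse I would argue by contrapositive: assume $D$ admits neither an $M_1$- nor an $M_2$-partition. Applying the preceding theorem separately to each property, $D$ contains as an induced subdigraph some minimal $M_1$-obstruction---either an independent set of two vertices or one of $F_9,F_{10},F_{11}$---and some minimal $M_2$-obstruction---either an asymmetric arc or one of $F_1,F_8$. If either of these witnesses is itself $F_1, F_8, F_9, F_{10}$, or $F_{11}$, then $D$ already contains an element of $\mathcal{F}$ and there is nothing more to prove.

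The remaining case---the crux of the theorem---is when the two witnesses are an independent pair $\{a,b\}$ and an asymmetric arc $(x,y)$. Since $\{a,b\}$ bears no arcs while $(x,y)$ does, $\{a,b\}\ne\{x,y\}$ as sets, so the set $U=\{a,b,x,y\}$ has three or four distinct vertices. I would next analyze the induced subdigraph $D[U]$ via a case split on $|U|$ and on the pattern of arcs joining $\{a,b\}$ to $\{x,y\}$ (at most the eight ordered pairs $ax,xa,ay,ya,bx,xb,by,yb$ in the four-vertex case, fewer when a vertex is shared). For each resulting pattern I would either locate an induced $F_9$, $F_{10}$, or $F_{11}$ inside a slightly enlarged induced subdigraph, or identify $D[U]$ with one of the ``middle'' obstructions $F_2,\ldots,F_7$ from Figure~\ref{M1M2MinObs}; by design, those six digraphs are precisely the minimal panchromatic obstructions assembled from an asymmetric arc together with an independent pair, so the list is exhaustive.

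The principal obstacle will be this finite but bulky case analysis. The two strongest simplifications should be the symmetry swapping $a\leftrightarrow b$, which halves the number of patterns, and digraph isomorphism, which collapses still more of them; together they ought to leave only a handful of nonisomorphic configurations to verify against the six middle $F_i$'s. I expect the verification to be mechanical once the enumeration is laid alongside Figure~\ref{M1M2MinObs}, but care will be needed to ensure that no configuration is overlooked or multiply counted.
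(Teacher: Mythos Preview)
Your outline follows the paper's proof closely up to the point where only an independent pair $\{a,b\}$ and an asymmetric arc $(x,y)$ remain; the three-vertex case $|U|=3$ is indeed covered exactly by $F_2,\ldots,F_7$. The gap is in your four-vertex case. The two outcomes you allow yourself---finding an induced $F_9,F_{10},F_{11}$, or finding one of $F_2,\ldots,F_7$---do not exhaust the possibilities. Concretely, take the configuration where each of the four cross-pairs $\{a,x\},\{a,y\},\{b,x\},\{b,y\}$ is a digon. Then every three-vertex subset of $U$ containing both $x$ and $y$ is $M_1$-partitionable (put the digon-neighbour with $x$ in one part and $y$ in the other), so no $F_9,F_{10},F_{11}$ appears; and the only independent pair in $U$ is $\{a,b\}$, which shares no vertex with the only asymmetric arc $(x,y)$, so no $F_2,\ldots,F_7$ appears either. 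You therefore cannot finish with the list you propose, and the phrase ``slightly enlarged induced subdigraph'' does not help, since nothing is assumed about $D$ outside $U$.

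The paper handles the four-vertex case by a reduction rather than a full enumeration: if any cross-pair between $\{a,b\}$ and $\{x,y\}$ is a non-edge, it yields a new independent pair overlapping $\{x,y\}$; if any cross-pair is an asymmetric arc, it yields a new asymmetric arc overlapping $\{a,b\}$; either way one is back in the three-vertex case. The single leftover configuration---all four cross-pairs are digons---is disposed of by observing that $\{a,b,x\}$ then induces $F_8$. So the fix to your plan is to add $F_8$ (and, if you insist on enumerating, also $F_1$) to the list of possible outputs, or better, to adopt this reduction and avoid the $4^4$-case enumeration entirely.
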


\begin{proof}
For the necessity, we have already observed in the
previous paragraph that the digraphs in $\mathcal{F}$
are minimal panchromatic obstructions.   Therefore,
a digraph $D$ admitting an $M_1$-obstruction or an
$M_2$-obstruction cannot contain any of the digraphs
in $\mathcal{F}$.

For the sufficiency, we
will proceed by contrapositive.   Let
$D$ be a panchromatic obstruction.

The minimal $M_1$-obstructions and the minimal
$M_2$-obstructions on $3$ vertices are in the set
$\mathcal{F}$.   Thus, if $D$ contains any of them,
we are done.   Otherwise, in order for $D$ to be a
panchromatic obstruction, $D$ should contain the
only minimal $M_1$-obstruction on two vertices, (an
independent set of size two), and the only minimal
$M_2$-obstruction on two vertices, (an asymmetric
arc).

Let $\{v_1, v_2\}$ be an independent set, and let
$(v_3, v_4)$ be an asymmetric arc of $D$.   First
suppose that $\{v_1, v_2\} \cap \{v_3, v_4\} \ne
\varnothing$.   This intersection contains
precisely one vertex. Hence, the set $\{v_1, v_2\}
\cup \{ v_3, v_4 \}$ has cardinality $3$, and it is
easy to verify that it induces the digraph $F_i$ for
some $i \in \{2,3,4,5,6,7\}$. Thus, we are done.

Else,  $\{v_1, v_2\} \cap \{v_3, v_4\} =
\varnothing$.   If there are no arcs, or there
is an asymmetric arc, between $\{v_1, v_2\}$ and
$\{v_3, v_4\}$, then we can choose an asymmetric
arc with an end in $\{v_1, v_2\}$, or we can
choose an independent set of size two sharing a
vertex with $(v_3, v_4)$, as in the previous
case.   Hence, every arc between $\{v_1, v_2\}$
and $\{v_3, v_4\}$ is present, and is a digon.
In this case, the set $\{v_1, v_2, v_3 \}$
induces the digraph $F_8$.

Since the cases are exhaustive and in all of them
we obtain the existence of an induced subdigraph
of $D$ isomorphic to a member of $\mathcal{F}$,
the proof is complete.
\end{proof}

\begin{cor}
Panchromatic patterns can be recognized in
polynomial time.
\end{cor}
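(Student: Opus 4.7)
The plan is to reduce recognition of panchromatic patterns to a constant number of elementary induced-subdigraph tests on $H$. Combining the characterization of Galeana-Sánchez and Strausz recalled at the start of this section with Theorem \ref{panchar}, $H$ is a panchromatic pattern if and only if $H$ is looped and $\mathcal{F}$-free. Both conditions are locally checkable.

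First I would scan the arc set of $H$ to confirm that every vertex carries a loop, which takes $O(|V_H| + |A_H|)$ time. Then, for each of the eleven digraphs $F_i \in \mathcal{F}$, I would test whether $H$ contains $F_i$ as an induced subdigraph by enumerating all ordered tuples of $|V_{F_i}|$ distinct vertices of $H$ and checking in constant time whether the induced subdigraph on the chosen vertices is isomorphic to $F_i$. Since $|V_{F_i}| \le 4$ for every $F_i \in \mathcal{F}$, each test runs in $O(|V_H|^{4})$ time, and summing over the fixed finite family $\mathcal{F}$ keeps the total cost polynomial. Declaring $H$ a panchromatic pattern precisely when the loop check succeeds and none of these tests finds a copy of some $F_i$ correctly decides the property, by the equivalence above.

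There is no genuine obstacle here: the structural work has already been carried out in Theorem \ref{panchar}, which turns a partitioning question into the presence or absence of a fixed finite list of bounded-size obstructions. Recognition then collapses into brute-force enumeration, and the only point worth emphasising is that the uniform bound on $|V_{F_i}|$ is what prevents the induced-subdigraph test from blowing up exponentially in $|V_H|$.
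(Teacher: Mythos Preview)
Your proposal is correct and follows the same brute-force induced-subdigraph route as the paper. One sharpening: every $F_i \in \mathcal{F}$ in fact has exactly $3$ vertices, so the enumeration runs in $O(|V_H|^3)$ rather than $O(|V_H|^4)$; your explicit loop check, on the other hand, is a detail the paper's proof glosses over but which is indeed required by the characterization.
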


\begin{proof}
To verify if a digraph $D$ is a panchromatic
pattern, it suffices to check whether $D$ has
induced subdigraphs isomorphic to some
member of $\mathcal{F}$.   Using a brute
force approach, this amounts to verify every
subset of $3$ vertices of $V_D$, which can
be done in roughly $o(|V_D|^3)$-time.
\end{proof}

\section{The reduction for $F_1, F_5, F_7$ and $F_8$}
\label{sec:genreduc}

Before proving our main result, we will
construct a polynomial reduction scheme
that will be used to prove that any looped
digraph $H$ containing one of $F_1, F_5,
F_7$ or $F_8$ as an induced subdigraph,
has an $NP$-complete kernel by $H$-walks
problem. We will reduce it from the
$k$-colouring problem for graphs
($k$-\textsc{Col}). We would like to point
out that we will not construct a single
polynomial reduction, but a family of
reductions that, depending on a part of
the vertex gadget, will work for different
choices of $H$.

For the remainder of this section $H$
will denote a looped digraph containing
$F_1, F_5, F_7$ or $F_8$ as an induced
subdigraph.   For a given graph $G$, we
will construct a digraph $D_G$ such that,
the graph $G$ is $k$-colorable if and only
if $D_G$ has a kernel by $H$-walks.   In
the following construction, {\em red, green}
and {\em blue} will be the vertices of the
induced copy of $F_i$, $i \in\{1, 5, 7, 8\}$,
in $H$; the set $\{ green, blue \}$ will be
an independent set of $H$.

Let $G$ be a graph, and consider any of
its acyclic orientations, $\overrightarrow{G}$.
Let $k \ge 3$ be a fixed integer.   For each
vertex $v$ of $G$, construct a $k$-cycle
$C_v = (x_{v1}, \dots, x_{vk}, x_{v1})$, and
color each of its arcs green.   Also, create a
copy $F_v$ of an $H$-arc-colored digraph
$F$ not having a kernel by $H$-walks, add
all the arcs from $F_v$ to $C_v$, and color
them green.   The resulting colored digraph
is the gadget for the vertex $v$.    Observe
that this gadget depends on the choice
of $F$.

Now, for every arc $(u,v)$ of
$\overrightarrow{G}$, and for every $1 \le i
\le k$, create a directed $4$-cycle
$Q_{(u,v)i}$ with arcs $(x_{(u,v)i}, y_{(u,v)i})$
and $(z_{(u,v)i}, w_{(u,v)i})$, colored green,
and $(y_{(u,v)i}, z_{(u,v)i})$ and $(w_{(u,v)i},
x_{(u,v)i})$ colored blue.   Finally, add the
arcs $(x_{ui}, x_{(u,v)i})$ and $(w_{(u,v)i},
x_{vi})$ colored blue. Let $D_G$ be the
resulting colored digraph.   Notice that
the $4$-cycle $Q_{(u,v)i}$ could be
replaced by any even cycle colored with
alternating colors, or with a blue
colored digon.   Nonetheless, we prefer
to keep the $4$-cycle because, if $F$
does not have digons, then $D_G$
neither has digons. Figure \ref{arcgadget}
shows the construction of the gadget for
the arc $(u,v)$ when $k = 3$.   The dashed
lines represent green arcs and the solid
lines represent blue arcs.

\begin{figure}
\begin{center}
\includegraphics{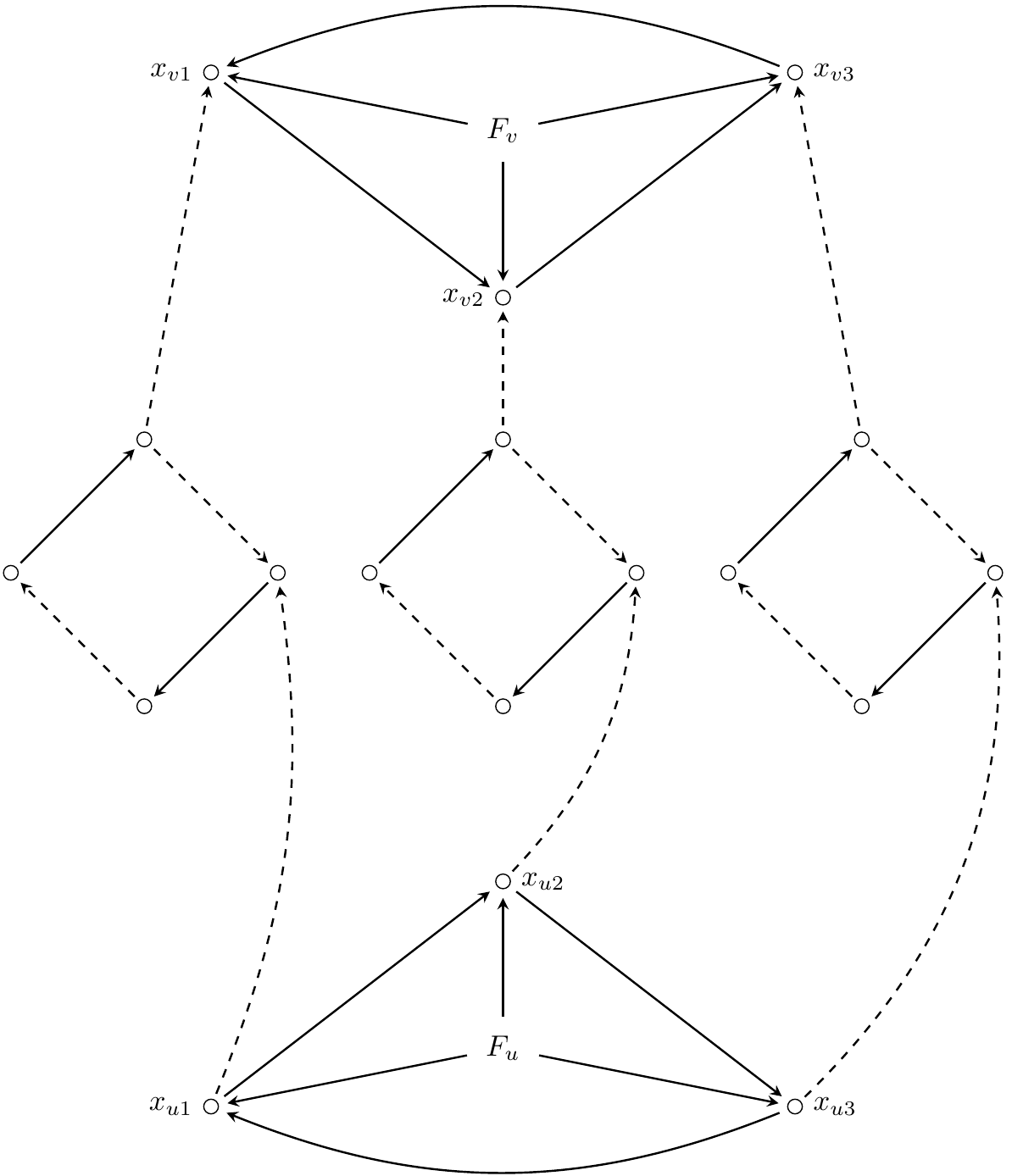}
\caption[width=\textwidth]{Gadget for the arc $(u,v)$} \label{arcgadget}
\end{center}
\end{figure}

\begin{lem} \label{gadgetdifcol}
Let $G$ be a graph.   If $D_G$ has a kernel by $H$-walks, $K$, then,
\begin{enumerate}
	\item For every vertex $v$ of $G$, precisely one vertex of $C_v$
		belongs to $K$.
	 \item If $(u,v)$ is an arc of $\overrightarrow{G}$, then,
	 	\begin{enumerate}
			\item If $x_{ui} \in K$, then $x_{vi} \notin K$.
			\item If $x_{vi} \in K$, then $x_{ui} \notin K$.
		\end{enumerate}
\end{enumerate}
\end{lem}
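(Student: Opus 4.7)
The plan is to leverage two key structural facts about $H$: since $H$ is looped, $(green,green)$ and $(blue,blue)$ lie in $A_H$, so monochromatic green or blue walks are automatically $H$-walks; and since $\{green,blue\}$ is independent in $H$, neither $(green,blue)$ nor $(blue,green)$ is in $A_H$, so no $H$-walk can switch between green and blue. These two observations drastically restrict reachability in $D_G$, and the whole lemma reduces to careful bookkeeping of what short walks are possible at each gadget transition. The main obstacle throughout is being disciplined about which walk continuations are blocked by this color restriction and which are not, especially inside the arc gadget, where the green/blue alternation is designed precisely to cut reachability down to single arcs.

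For Part 1, the upper bound $|K \cap V(C_v)| \le 1$ is immediate: $C_v$ is a directed cycle of green arcs, so any two of its vertices reach each other by a purely green walk, and independence of $K$ forbids both from lying in $K$. For the lower bound I would argue by contradiction. Assume $K \cap V(C_v) = \varnothing$. The only arcs of $D_G$ leaving $V(F_v)$ go into $V(C_v)$ and are colored green; once an $H$-walk enters $C_v$ along a green arc it cannot leave $C_v$, because the only out-arcs of $C_v$ exiting $C_v$ are blue (into arc gadgets), and $(green,blue) \notin A_H$. Hence every $H$-walk starting in $V(F_v)$ stays inside $V(F_v) \cup V(C_v)$, and once it leaves $F_v$ it cannot return. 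Setting $K' = K \cap V(F_v)$, independence of $K'$ inside $F_v$ is inherited from $K$; for absorbency, any $a \in V(F_v) \setminus K'$ must reach some $b \in K$ in $D_G$, but since the walk is confined to $V(F_v) \cup V(C_v)$ and $K \cap V(C_v) = \varnothing$, the vertex $b$ must lie in $K'$ and the walk stays inside $V(F_v)$. Thus $K'$ is a kernel by $H$-walks of $F_v \cong F$, contradicting the choice of $F$.

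For Part 2, since (a) and (b) are contrapositives of one another, I would just show that $x_{ui}$ and $x_{vi}$ cannot both belong to $K$. Suppose they do. The single blue arc $(x_{ui}, x_{(u,v)i})$ is itself an $H$-walk, so independence of $K$ forces $x_{(u,v)i} \notin K$; symmetrically, the blue arc $(w_{(u,v)i}, x_{vi})$ forces $w_{(u,v)i} \notin K$. Next I would compute reachable sets inside the gadget $Q_{(u,v)i}$: from $z_{(u,v)i}$ the only out-arc is green (to $w_{(u,v)i}$) and the continuation $(green,blue)$ is blocked, so $z_{(u,v)i}$ reaches exactly $w_{(u,v)i}$; analogously $x_{(u,v)i}$ reaches only $y_{(u,v)i}$. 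Absorbency of $K$ then forces $z_{(u,v)i} \in K$ (because $w_{(u,v)i} \notin K$) and $y_{(u,v)i} \in K$ (because $x_{(u,v)i} \notin K$). But the single blue arc $(y_{(u,v)i}, z_{(u,v)i})$ is an $H$-walk between two vertices of $K$, violating independence, which is the desired contradiction.
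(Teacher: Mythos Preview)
Your proof is correct and follows essentially the same approach as the paper. The only organizational difference is in Part 2: the paper proves (a) and (b) separately by direct forward chaining (e.g., from $x_{ui}\in K$ deduce $x_{(u,v)i}\notin K$, hence $y_{(u,v)i}\in K$, hence $z_{(u,v)i}\notin K$, hence $w_{(u,v)i}\in K$, hence $x_{vi}\notin K$), whereas you observe (a) and (b) are contrapositives and prove ``not both in $K$'' by contradiction; both arguments rest on the same local reachability analysis in $Q_{(u,v)i}$.
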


\begin{proof}
Let $v$ be any vertex of $G$.   Since $F_v$ does not have a kernel
by $H$-walks,  then at least one vertex of $F_v$ must be absorbed
by a vertex in $K \setminus V(F_v)$.    By construction, $\partial^+
(F_v) = V(C_v)$, and all the arcs going out from $C_v$ are blue,
hence, $V(C_v) \cap K \ne \varnothing$.   On the other hand,
$C_v$ is a monochromatic cycle, and thus $|K \cap V(C_v)| \le 1$.
Therefore, $|K \cap V(C_v)| = 1$.

Now, suppose that $x_{ui} \in K$.   Since $(x_{ui}, x_{(u,v)i})$ is an
arc of $D$, we have $x_{(u,v)i} \notin K$.   But $Q_{(u,v)i}$ is a
$4$-cycle with alternating colors blue and green, hence, locally,
$K$ behaves as a regular kernel.   Thus, $y_{(u,v)i}$ and
$w_{(u,v)i}$ must belong to $K$ in order to absorb $x_{(u,v)i}$
and $z_{(u,v)i}$ by $H$-walks, respectively.   Since $(w_{(u,v)i},
x_{vi})$ is an arc of $D_G$, we have  $x_{vi} \notin K$.

Similarly, if $x_{vi} \in K$, then $w_{(u,v)i}$ is absorbed by
$H$-walks by $x_{vi}$, and hence, it cannot belong to $K$.
This fact, together with the structure of $Q_{(u,v)i}$ implies
that $z_{(u,v)i}$ and $x_{(u,v)i}$ belong to $K$.   Since
$(x_{ui}, x_{(u,v)i})$ is an arc of $D_G$, we have $x_{ui}
\notin K$.
\end{proof}

\begin{lem} \label{gadgetunique}
Let $G$ be a graph, and let $I$ be an independent by $H$-walks
subset of $V(D_G)$ such that $I \cap V(C_v) \ne \varnothing$ for
every $v \in V(G)$.   Let $(u,v)$ be an arc of $\overrightarrow{G}$,
and let $I \cap V(C_u) = \{ x_{ui} \}$ and $I \cap V(C_v) = \{ x_{vj}
\}$.   If for every arc $(u,v)$ of $\overrightarrow{G}$ we have $i
\ne j$, then $I$ can be extended to be a kernel by $H$-walks of
$D_G$.
\end{lem}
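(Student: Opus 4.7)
The plan is to exhibit an explicit extension $K\supseteq I$ and verify that it is a kernel by $H$-walks. For every arc $(u,v)$ of $\overrightarrow{G}$ and every $\ell\in\{1,\dots,k\}$, I would add a pair of vertices from the $4$-cycle $Q_{(u,v)\ell}$ to $K$ as follows: if $\ell=i$ (the index with $x_{u\ell}\in I$), add $y_{(u,v)\ell}$ and $w_{(u,v)\ell}$; if $\ell=j$ (the index with $x_{v\ell}\in I$), add $x_{(u,v)\ell}$ and $z_{(u,v)\ell}$; otherwise, default to adding $y_{(u,v)\ell}$ and $w_{(u,v)\ell}$. The hypothesis $i\ne j$ guarantees that, for each gadget, the first two sub-cases are mutually exclusive, so the prescription is unambiguous.

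The main structural observation that makes the verification local is that $\{green, blue\}$ is an independent set of $H$, so neither $(green, blue)$ nor $(blue, green)$ lies in $A_H$. Hence no $H$-walk of $D_G$ alternates between green and blue arcs. Combined with the fact that $C_v$ is a monochromatic green cycle (so any walk inside $C_v$ is an $H$-walk by the loop at $green$), this implies that every $H$-walk of $D_G$ either stays inside one monochromatic region (a cycle $C_v$ or the internal part of an $F_v$) or consists of a single arc that changes color. In particular, no $H$-walk escapes a given $Q_{(u,v)\ell}$ into another gadget, and no $H$-walk crosses from $C_u$ or $C_v$ into a gadget and out again.

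For independence of $K$, I would argue in three steps. First, the two added vertices of each pair cannot $H$-reach each other inside $Q_{(u,v)\ell}$, since any path of length at least two in that $4$-cycle alternates green and blue and is therefore not an $H$-walk. Second, using $i\ne j$, no added vertex creates an $H$-walk to or from the $I$-vertex of $C_u$ or $C_v$: for instance, when $\ell=i$ the vertex $w_{(u,v)\ell}$ $H$-reaches only $x_{(u,v)\ell}$ and $x_{v\ell}$, and $x_{v\ell}\notin I$ precisely because $\ell=i\ne j$; the symmetric sub-cases are handled analogously. Third, by the colour-incompatibility observation above, no new inter-gadget $H$-walk is created, so the independence of the original $I$ is preserved throughout the extension.

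For absorbency of $K$, I would partition the remaining vertices by region: every vertex of $F_v$ is absorbed by $x_{vi}$ via the direct green arc prescribed by the construction; every non-selected vertex of $C_v$ reaches $x_{vi}$ by a monochromatic green walk around the cycle; and every non-added vertex of each $Q_{(u,v)\ell}$ is absorbed by one of its two partners in the same $4$-cycle through a single green or blue arc (for $\ell=i$: $x_{(u,v)\ell}\to y_{(u,v)\ell}$ and $z_{(u,v)\ell}\to w_{(u,v)\ell}$; for $\ell=j$: $y_{(u,v)\ell}\to z_{(u,v)\ell}$ and $w_{(u,v)\ell}\to x_{(u,v)\ell}$; and symmetrically in the remaining case). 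The main obstacle I expect is the bookkeeping needed to make the local nature of the verification rigorous, but once the $\{green, blue\}$-independence remark is stated once and for all, independence reduces cleanly to a per-gadget check driven by the hypothesis $i\ne j$.
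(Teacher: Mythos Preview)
Your plan mirrors the paper's closely: name an explicit extension $K\supseteq I$ by selecting two ``opposite'' vertices of each $4$-cycle $Q_{(u,v)\ell}$, then verify absorbency and independence gadget by gadget. The paper makes the opposite default choice (it takes $x_{(u,v)\ell},z_{(u,v)\ell}$ for every $\ell\ne i$ and $y_{(u,v)i},w_{(u,v)i}$ only when $\ell=i$), but the skeleton of the argument is the same, and your absorbency verification is essentially identical to the paper's.

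There is, however, a genuine gap in your independence argument. Your structural claim that ``no $H$-walk escapes a given $Q_{(u,v)\ell}$ into another gadget'' is false. Since $H$ is looped, $(blue,blue)\in A_H$, so monochromatic blue walks of length two are $H$-walks. Whenever $v$ has both an in-arc $(u,v)$ and an out-arc $(v,w)$ in $\overrightarrow G$, the walk
\[
w_{(u,v)\ell}\ \longrightarrow\ x_{v\ell}\ \longrightarrow\ x_{(v,w)\ell}
\]
uses two blue arcs and is therefore an $H$-walk linking two distinct arc-gadgets through $C_v$. Under your rule (add $y,w$ whenever $\ell\ne j$ and $x,z$ when $\ell=j$), take $\ell=m$, the index of the $I$-vertex in $C_w$: then $w_{(u,v)m}\in K$ because $m\ne j$, while $x_{(v,w)m}\in K$ because $m$ is precisely the ``$j$''-case for the arc $(v,w)$. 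Hence your $K$ is not independent by $H$-walks, and the per-gadget locality you invoke does not hold. To repair the argument one must also control these length-two blue walks that cross a cycle vertex; the paper's independence paragraph is very terse at exactly this point and its default choice runs into the analogous configuration at $\ell=i$, so this is a place where extra care beyond either write-up is needed.
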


\begin{proof}
Consider $I$ as in the hypothesis.   By construction, every vertex
in the gadget for every $v \in V(G)$ is absorbed by the only vertex
in $I \cap V(C_v)$.

Let $(u,v)$ be an arc in $\overrightarrow{G}$ and suppose that $I
\cap V(C_u) = \{ x_{ui} \}$.   For every $j \ne i$, add $x_{(u,v)j}$
and $z_{(u,v)j}$ to $I$; also, add $y_{(u,v)i}$ and $w_{(u,v)i}$ to
$I$.   Repeat this process with every arc of $\overrightarrow{G}$
and let $K$ be the resulting subset of $V(D_G)$.   In the former
case, $y_{(u,v)j}$ and $w_{(u,v)j}$ are absorbed by $H$-walks
by $z_{(u,v)j}$ and $x_{(u,v)j}$, respectively.   In the latter case,
$x_{(u,v)i}$ and $z_{(u,v)i}$ are absorbed by $H$-walks by
$y_{(u,v)i}$ and $w_{(u,v)i}$, respectively.

For the independence by $H$-walks of $K$, first observe that,
except from the arcs from $F_v$ to $C_v$, every arc coming from
and going to $C_v$ in $D_G$ is blue.   Since all the arcs from
$F_v$ to $C_v$, and all the arcs in $C_v$ are green, the set
$\{ x_{ui}, x_{vj} \}$ is independent by $H$-walks, for every pair
of different vertices $u, v \in V(G)$.   Recalling the construction
of $K$ in the preceding paragraph, $x_{(u,v)i} \notin K$ when
$x_{ui} \in K$.   Also, this is the only case where $w_{(u,v)i} \in
K$, but by the choice of $I$, we have $x_{vi} \notin K$.   Hence
$K$ is independent by $H$-walks.
\end{proof}

\begin{lem} \label{1578red}
If $G$ is a graph, then $G$ is $k$-colorable if and only if $D_G$
has a kernel by $H$-walks.
\end{lem}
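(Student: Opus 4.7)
The plan is to apply Lemmas \ref{gadgetdifcol} and \ref{gadgetunique} in the two directions of the biconditional, using only the structural fact that $\{\emph{green}, \emph{blue}\}$ is an independent set of $H$ (together with $H$ being looped) to control the possible $H$-walks in $D_G$.

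For the $(\Leftarrow)$ direction, I would start with a kernel by $H$-walks $K$ of $D_G$ and extract a coloring as follows. Lemma \ref{gadgetdifcol}(1) gives, for each $v \in V(G)$, a unique index $c(v) \in \{1, \dots, k\}$ with $x_{v, c(v)} \in K$, so $c$ is a well-defined function $V(G) \to \{1, \dots, k\}$. To see that $c$ is proper, I would pick any edge $uv$ of $G$ and, swapping roles if necessary, assume $(u,v)$ is an arc of $\overrightarrow{G}$; Lemma \ref{gadgetdifcol}(2a) then yields $x_{v, c(u)} \notin K$, and since $K \cap V(C_v) = \{x_{v, c(v)}\}$, this forces $c(u) \ne c(v)$.

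For the $(\Rightarrow)$ direction, given a proper $k$-coloring $c$ of $G$, I would set $I = \{x_{v, c(v)} : v \in V(G)\}$ and apply Lemma \ref{gadgetunique}. Since $c$ is a $k$-coloring, $I$ meets each $C_v$ in exactly one vertex, and the distinct-index condition of that lemma holds for every arc of $\overrightarrow{G}$; the only remaining task is to verify that $I$ is independent by $H$-walks.

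Verifying this independence is the main obstacle, and it rests on the hypothesis that $(\emph{green},\emph{blue})$ and $(\emph{blue},\emph{green})$ do not belong to $A_H$, while $H$ being looped gives $(\emph{green},\emph{green}), (\emph{blue},\emph{blue}) \in A_H$. In particular, an $H$-walk in $D_G$ can never switch between green and blue arcs. Starting at $x_{u, c(u)}$, the only out-arcs are green arcs inside $C_u$ and blue arcs of the form $(x_{u, c(u)}, x_{(u,w), c(u)})$ for out-neighbors $w$ of $u$ in $\overrightarrow{G}$; once such a blue arc is used, the only continuation is the green arc $(x_{(u,w), c(u)}, y_{(u,w), c(u)})$, a forbidden blue-then-green transition. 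Hence every $H$-walk from $x_{u, c(u)}$ either stays inside $C_u$ or terminates at some $x_{(u,w), j}$, and in particular never reaches $x_{v, c(v)}$ for $v \ne u$. This shows that $I$ is independent by $H$-walks and, combined with Lemma \ref{gadgetunique}, completes the proof.
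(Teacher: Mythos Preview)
Your proof is correct and follows the same two-lemma strategy as the paper: use Lemma~\ref{gadgetdifcol} to read off a proper $k$-coloring from a kernel, and use Lemma~\ref{gadgetunique} to extend the set $I=\{x_{v,c(v)}\}$ to a kernel. The paper's proof simply asserts that $I$ ``clearly'' satisfies the hypotheses of Lemma~\ref{gadgetunique}, whereas you spell out why $I$ is independent by $H$-walks; your argument that any $H$-walk out of $x_{u,c(u)}$ either stays green inside $C_u$ or takes a single blue arc to some $x_{(u,w),c(u)}$ and then dies (because the only out-arc there is green and blue--green is forbidden) is exactly the observation the paper makes inside the proof of Lemma~\ref{gadgetunique} itself.
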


\begin{proof}
Let $c: V(G) \to \{ 1, \dots, k \}$ be a $k$-coloring of $G$.   Consider
the subset $I \subset V(D_G)$ defined as $\{ x_{vi} \colon\  v \in V(G),
c(v) = i \}$.   Clearly, $I$ is a set fulfilling the conditions of Lemma
\ref{gadgetunique}, so it can be extended in an unique way to a
kernel by $H$-walks, $K$, of $D_G$.

Conversely, let $K$ be a kernel by $H$-walks of $D_G$.   By Lemma
\ref{gadgetdifcol}, for every vertex $v \in V(G)$, there is precisely one
vertex in $K \cap V(C_v)$, say, $x_{vi}$.    If we define a $k$-coloring,
$c$, of $G$ as $c(v) = i$ for every $v \in V(G)$, then Lemma
\ref{gadgetdifcol} implies that adjacent vertices receive different
colors.   Thus, $c$ is a proper $k$-coloring of $G$.
\end{proof}

\section{Main Results} \label{sec:mainres}
 
We have already proved in Section
\ref{sec:algorithm} that the kernel by $H$-walks
problem is in $NP$.   Hence, in order to prove
that it is $NP$-complete for a particular choice
of $H$, it suffices to prove its $NP$-hardness.
As usual, we will achieve this through a
polynomial reduction.   To prove the dichotomy
we claim, we must show that every digraph $H$
which is not a panchromatic pattern, has an
$NP$-hard kernel by $H$-walks problem.
Since every panchromatic pattern is a looped
digraph, we will begin discussing the digraphs
$H$ which are missing at least one loop.

Let $H$ be a digraph, and let $red$ be a vertex
of $H$ without loops.   Clearly, if we consider
any digraph $D$, and color each of its arcs red
to obtain the $H$-arc-colored digraph $D'$, then
the only $H$-walks of $D'$ are its arcs.  Thus, a
set $K \subseteq V_D$ is a kernel of $D$ if and
only if it is a kernel by $H$-walks of $D'$.   This
argument describes a linear reduction of the
kernel by $H$-walks problem from the kernel
problem.   Moreover, as discussed on Section
\ref{sec:Intro}, $D$ can be chosen to be
$3$-colorable, or planar with $\Delta^-,
\Delta^+ \le 2$ and $\Delta \le 3$, and the
problem remains $NP$-complete.   So, we
have proven the following result.
 
\begin{prop} \label{loopless}
Let $H$ be a digraph with at least one loopless
vertex.   The problem of determining whether an
arc-colored digraph $D$ has a kernel by
$H$-walks is $NP$-complete, even when
restricted to $3$-colorable digraphs, or to planar
digraphs with $\Delta^-, \Delta^+ \le 2$ and
$\Delta \le 3$.
 \end{prop}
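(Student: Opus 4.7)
The plan is to formalize the reduction already sketched in the paragraph preceding the statement. First I would fix a loopless vertex $red \in V_H$ guaranteed by hypothesis; the key observation is that since $(red, red) \notin A_H$, any $H$-walk whose arcs are all colored $red$ must have length at most $1$. Given an arbitrary (loopless) digraph $D$, I would build $D'$ by setting $c(a) = red$ for every $a \in A_D$; then the $H$-walks of $D'$ are precisely the single-arc walks, i.e., the arcs of $D$ themselves. Consequently, reachability by $H$-walks in $D'$ coincides with the standard out-neighbor relation in $D$, and the definitions collapse so that $K \subseteq V_D$ is a kernel (in the classical sense) of $D$ if and only if $K$ is a kernel by $H$-walks of $D'$.

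Next, since the construction $D \mapsto D'$ merely attaches a constant label to every arc, it is computable in linear time, yielding a polynomial (in fact linear) many-one reduction from the kernel problem to the kernel by $H$-walks problem. Membership of the kernel by $H$-walks problem in $NP$ has already been established in Section \ref{sec:algorithm}, so it only remains to transfer known $NP$-hardness results for the classical kernel problem through this reduction.

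To conclude the stronger restricted versions, I would invoke the hardness results cited in Section \ref{sec:Intro}: Fraenkel's theorem \cite{F} shows $NP$-completeness of the kernel problem on planar digraphs with $\Delta^+, \Delta^- \le 2$ and $\Delta \le 3$, and Hell and Hern\'andez-Cruz \cite{HHC1} establish $NP$-completeness on $3$-colorable digraphs. In both cases, the reduction $D \mapsto D'$ preserves the underlying digraph structure (degrees, planarity, and the chromatic number of the underlying graph are untouched because we only add a coloring to the arcs), so the restricted instances of the kernel problem map to restricted instances of the kernel by $H$-walks problem of exactly the same combinatorial type.

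There is no real obstacle here; the only subtlety worth being explicit about is why length-$\ge 2$ walks in $D'$ fail to be $H$-walks, which is immediate from the absence of the loop at $red$ in $H$. Once that observation is recorded, the equivalence of kernels and kernels by $H$-walks is essentially definitional, and the hardness inheritance from \cite{F} and \cite{HHC1} is automatic.
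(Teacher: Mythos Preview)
Your proposal is correct and follows exactly the same argument the paper gives in the paragraph preceding Proposition~\ref{loopless}: color every arc with the loopless vertex \emph{red}, observe that $H$-walks then coincide with single arcs, and inherit $NP$-hardness from \cite{F} and \cite{HHC1} while invoking Section~\ref{sec:algorithm} for membership in $NP$. There is nothing to add; your write-up is in fact more explicit than the paper's own.
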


Now, we can consider only looped digraphs $H$.
Although the forbidden induced subdigraph
characterization given in Section \ref{sec:minobs}
deals with loopless digraphs, it is clear that the
same characterization works with looped
digraphs, we just have to consider the looped
version of each digraph in the family $\mathcal{F}$.
Our next result deals with the digraphs $H$
containing a member of the family $\mathcal{F}$,
other than $F_1, F_5, F_7$ or $F_8$, as an
induced subdigraph.
 
\begin{thm} \label{mostcases}
Let $H$ be a looped digraph containing three
vertices, $red$, $blue$ and $green$, such that
there is an asymmetric arc from red to green,
and the arc from red to blue is missing.   The
problem of determining whether an
$H$-arc-colored digraph $D$ has a kernel by
$H$-walks is $NP$-complete, even when
restricted to planar bipartite digraphs with
$\Delta^-, \Delta^+ \le 2$, and $\Delta \le 3$.
\end{thm}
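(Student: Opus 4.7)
The plan is to establish $NP$-hardness by a polynomial reduction from the kernel problem on planar digraphs with $\Delta^-, \Delta^+ \le 2$ and $\Delta \le 3$, which is $NP$-complete by Fraenkel's theorem recalled in Section~\ref{sec:Intro}. Given such a digraph $D$, I build the $H$-arc-colored digraph $D'$ by replacing every arc $(u,v)$ of $D$ with a three-arc gadget: a subdivision vertex $x_{uv}$ together with a pendant sink $y_{uv}$, and the arcs $(u, x_{uv})$, $(x_{uv}, v)$, $(x_{uv}, y_{uv})$ colored $red$, $green$, and $blue$, respectively.

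The three hypotheses on $H$ are tailored to control the $H$-walk structure of $D'$. Since $(red, green) \in A_H$, the walk $u \to x_{uv} \to v$ is an $H$-walk, so $u$ reaches $v$ by $H$-walks and the gadget simulates the arc $(u,v)$ of $D$. Since $(red, blue) \notin A_H$, the walk $u \to x_{uv} \to y_{uv}$ is not an $H$-walk, so $u$ does not $H$-reach the pendant $y_{uv}$. Since the arc from $red$ to $green$ is asymmetric, $(green, red) \notin A_H$, and so the walk $u \to x_{uv} \to v \to x_{vw}$ is not an $H$-walk; reachability therefore cannot propagate past original vertices. A brief case analysis on consecutive color pairs confirms that these are the only non-trivial $H$-walks in $D'$, yielding the reach sets: $u \in V(D)$ $H$-reaches $N^+_D(u) \cup \{x_{uv} : (u,v) \in A(D)\}$; $x_{uv}$ $H$-reaches $\{v, y_{uv}\}$; and $y_{uv}$ reaches nothing.

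From these reach sets the correspondence follows. Each $y_{uv}$ is a sink whose only in-neighbor cannot absorb it, so every kernel by $H$-walks $K'$ of $D'$ must contain every $y_{uv}$; then independence forces $x_{uv} \notin K'$ because $x_{uv}$ $H$-reaches $y_{uv}$. Setting $K = K' \cap V(D)$, a pair $u, v \in K$ with $(u,v) \in A(D)$ would violate independence of $K'$ through the walk $u \to x_{uv} \to v$, and each $u \notin K$ must $H$-reach a vertex of $K'$, which by the reach computation can only be an original vertex of $K$ witnessing absorption in $D$. Hence $K$ is a kernel of $D$, and conversely $K \cup \{y_{uv} : (u,v) \in A(D)\}$ is a kernel by $H$-walks of $D'$ whenever $K$ is a kernel of $D$.

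It remains to check that $D'$ lies in the claimed restricted class. Planarity is preserved because we only subdivide arcs and attach pendants; bipartiteness follows with the parts $V(D) \cup \{y_{uv} : (u,v) \in A(D)\}$ and $\{x_{uv} : (u,v) \in A(D)\}$; each $u \in V(D)$ inherits its in- and out-degrees from $D$, each $x_{uv}$ has in-degree $1$ and out-degree $2$, and each $y_{uv}$ has total degree $1$, so the degree bounds hold. The main obstacle I expect is the exhaustive verification that the three prescribed relations on $A_H$, together with the restricted color palette used in $D'$, really do forbid every $H$-walk that would break the simulation; this is precisely the step where both missing arcs $(red, blue)$ and $(green, red)$ are used in an essential way.
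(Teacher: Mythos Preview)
Your proposal is correct and follows essentially the same approach as the paper: the same reduction from Fraenkel's planar kernel problem, the same gadget (subdivide each arc with a vertex carrying a pendant sink, coloring the three arcs $red$, $green$, $blue$), the same reachability analysis using exactly the three hypotheses $(red,green)\in A_H$, $(green,red)\notin A_H$, $(red,blue)\notin A_H$, and the same correspondence $K \leftrightarrow K\cup\{\text{pendants}\}$. Your notation differs ($x_{uv},y_{uv}$ versus the paper's $v_{(x,y)},v'_{(x,y)}$), and you spell out the degree and bipartiteness verification a bit more explicitly, but the argument is the same.
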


\begin{proof}
We will reduce it from the kernel problem,
which is known to be $NP$-complete even
when restricted to planar digraphs with
$\Delta^-, \Delta^+ \le 2$, and $\Delta \le 3$.

Let $D$ be a planar digraph with the
aforementioned degree restrictions.   We will
construct a new $H$-arc-colored digraph
$D'$ in the following way. Subdivide every
arc $(x,y)$ of $D$ by adding the intermediate
vertex $v_{(x,y)}$, and create an additional
vertex $v'_{(x,y)}$ along with the arc
$(v_{(x,y)}, v'_{(x,y)})$.   For every arc $(x,y)$
of $D$, color red, green and blue the arcs
$(x, v_{(x,y)})$, $(v_{(x,y)}, y)$ and $(v_{(x,y)},
v'_{(x,y)})$ of $D'$, respectively.   Clearly,
the digraph $D'$ is planar, bipartite, and
have the desired degree constraints.
We will prove that $D$ has a kernel if and
only if $D'$ has a kernel by $H$-walks.

Let $S$ and $T$ be the subsets of $V_D'$
defined as $$S = \{ v'_{(x,y)} \colon\ (x,y) \in
A_D \},$$ $$T = \{ v_{(x,y)} \colon\ (x,y) \in
A_D \}.$$    Since every vertex in $S$ has
out-degree equal to zero, $S$ must be
contained in every kernel by $H$-walks
of $D'$.   Moreover, every vertex in $T$
is absorbed by some vertex in $S$, so
$T$ does not intersect any kernel by
$H$-walks of $D$.

\begin{claim1} \label{C11}
For every vertex $x \in V_D \cap V_{D'}$,
the vertices reached from $x$ in $D'$ by
$H$-walks are precisely the vertices in
the set $$R = N_D^+(x) \cup \{ v_{(x,y)}
\colon\ y \in N^+(x) \}.$$
\end{claim1}

Suppose first that $D$ has a kernel $K$.
We affirm that $K' = K \cup S$ is a kernel
by $H$-walks for $D'$.   We have already
observed that every vertex of $T$ is
absorbed by $H$-walks by $K'$.   Let $x$
be a vertex in $V_{D'} \cap V_D$.   If $x
\notin K$, then there exists $y \in K$ such
that $(x,y) \in A_D$.   It follows from Claim
\ref{C11} that $x$ is absorbed by $H$-walks
by $y$.  Hence $K'$ is absorbent by
$H$-walks in $D'$.   For the independence
by $H$-walks of $K'$, notice that the
vertices in $S$ have zero out-degree, so
they cannot reach any other vertex. Now,
for the vertices in $K$, it follows from
Claim \ref{C11} that they cannot reach
the vertices in $S$ by $H$-walks, and
the only vertices in $V_D \cap V_{D'}$ they
can reach by $H$-walks are precisely
the vertices in $N_D^+(x)$.   Thus, $K'$
is independent by $H$-walks.

Now, suppose that $K'$ is a kernel by
$H$-walks of $D'$.   We affirm that $K =
K' \setminus S$ is a kernel of $D$.   Let
$x$ be a vertex in $V_D \setminus K$.
Since $T \cap K' = \varnothing$, it follows
from Claim \ref{C11} that there exists a
vertex $y \in N_D^+ (x)$ such that $y \in
K'$.   Therefore, $y \in K$ and $K$ is
absorbent.   Claim \ref{C11} together
with the independence by $H$-walks of
$K'$ imply that $K$ is an independent set
of $D$.

\begin{proof}[Proof of Claim \ref{C11}]
Since there is an
arc from red to green in $H$, for any arc
$(x,y) \in A_D$, the walk $(x, x_{(x,y)}, y)$
is an $H$-walk in $D'$.   Since the arcs
from red to blue and from green to red
are missing in $H$, the walk $(x, x_{(x,y)},
y)$ is a maximal $H$-walk in $D'$, and
the walk $(x, v_{(x,y)}, v'_{(x,y)})$ is not
an $H$-walk.
\end{proof}
\end{proof}

Finally, we deal with digraphs $H$
containing one of $F_1, F_5, F_7$ or
$F_8$.

\begin{thm} \label{1578NPc}
Let $H$ be a digraph containing $F_i$
as an induced subdigraph for some $i \in
\{ 1, 5, 7, 8 \}$.   The kernel by $H$-walks
problem is $NP$-complete.
\end{thm}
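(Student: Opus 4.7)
The plan is to combine the reduction scheme of Section \ref{sec:genreduc} with the panchromatic obstruction characterization of Section \ref{sec:minobs}. First I would dispose of the loopless case: if $H$ has at least one loopless vertex, Proposition \ref{loopless} already yields $NP$-completeness. So assume henceforth that $H$ is looped. Membership in $NP$ is supplied by the corollary in Section \ref{sec:algorithm}, so only $NP$-hardness remains.

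For $NP$-hardness I would use the polynomial reduction from the $k$-\textsc{Col} problem built in Section \ref{sec:genreduc}, with $k = 3$ (which keeps $k$-\textsc{Col} $NP$-complete). Lemma \ref{1578red} does almost all the work, but it requires a gadget: an $H$-arc-colored digraph $F$ with no kernel by $H$-walks, whose arc colors lie entirely in the induced copy of $F_i$. The crucial step is producing such an $F$.

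To obtain $F$, I would invoke the fact, established in Section \ref{sec:minobs} (before the statement of Theorem \ref{panchar}), that each $F_i$ with $i \in \{1,5,7,8\}$ is a minimal panchromatic obstruction. In particular, $F_i$ is not a panchromatic pattern, so by definition there exists an $F_i$-arc-colored digraph $F$ without a kernel by $F_i$-walks. Since $F_i$ is an \emph{induced} subdigraph of $H$, for any two colors $a, b \in V(F_i) \subseteq V(H)$ the pair $(a,b)$ is an arc of $F_i$ iff it is an arc of $H$. Consequently, a walk in $F$ whose arc colors remain in $V(F_i)$ is an $F_i$-walk iff it is an $H$-walk, so viewing $F$ as an $H$-arc-colored digraph it still has no kernel by $H$-walks.

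Plugging this $F$ into the construction of $D_G$ from Section \ref{sec:genreduc} and applying Lemma \ref{1578red} then gives a polynomial-time reduction from $3$-\textsc{Col} to the kernel by $H$-walks problem, completing the proof. The main conceptual obstacle is the construction of the gadget $F$; I expect it to be handled cleanly by appealing to the panchromatic obstruction characterization plus the induced-subdigraph observation above, thus avoiding any explicit case analysis over $i \in \{1,5,7,8\}$.
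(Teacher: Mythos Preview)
Your proposal is correct and follows essentially the same approach as the paper: both proofs invoke Lemma~\ref{1578red} to obtain the polynomial reduction from $k$-\textsc{Col}. You are more explicit than the paper on two points the paper leaves implicit (the loopless case via Proposition~\ref{loopless}, and the existence of the gadget $F$, which the paper silently assumes in Section~\ref{sec:genreduc} and which follows simply from $H$ not being a panchromatic pattern); your $F_i$-walks versus $H$-walks argument via the induced-subdigraph observation is a valid way to produce $F$, though the more direct route---$H$ itself is not a panchromatic pattern, so some $H$-arc-colored $F$ without a kernel by $H$-walks exists---would suffice.
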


\begin{proof}
Lemma \ref{1578red} shows that the kernel
by $H$-walks problem can be polynomially
reduced from the $k$-coloring problem for
graphs, for any $k \ge 3$.
\end{proof}

We are now ready to prove our main result.

\begin{proof}[Proof of Theorem \ref{mainthm}]
Let $H$ be a digraph which is not a
panchromatic pattern.   If $H$ is not a looped
digraph, Proposition \ref{loopless} implies that
the kernel by $H$-walks problem is
$NP$-complete.   Else, Theorem \ref{panchar}
implies that $H$ contains an element of
$\mathcal{F}$ as an induced subdigraph.
It follows from Theorems \ref{mostcases} and
\ref{1578NPc} that the kernel by $H$-walks
problem is $NP$-complete.
\end{proof}

\section{Concluding remarks} \label{sec:conclusions}

We find interesting that, although for every
digraph $H$ containing $F_7$ or $F_8$ as
an induced subdigraph, there exists an
arc-colored digraph $D$ such that $D$ does
not have a kernel by $H$-walks, no examples
are known of such $D$.   We think that once
these examples are found, Theorem \ref{1578NPc}
could be improved by restricting the family of
digraphs $D$ where the kernel by $H$-walks
problem remains $NP$-complete.   This
improvement can be achieved for digraphs $H$
containing $F_1$ or $F_5$ as an induced
subdigraph.   Figure \ref{15obs} shows an
$F_5$-arc-colored digraph without a kernel by
$F_5$-walks (on the left, Arpin and Linek \cite{AL})
and an $F_1$-arc-colored digraph without a
kernel by $F_1$-walks (on the right).   In both
cases, the doubled arcs correspond to the top
vertex of $F_i$, $i \in \{1, 5\}$, in Figure
\ref{M1M2MinObs}.

\begin{figure}
\begin{center}
\includegraphics{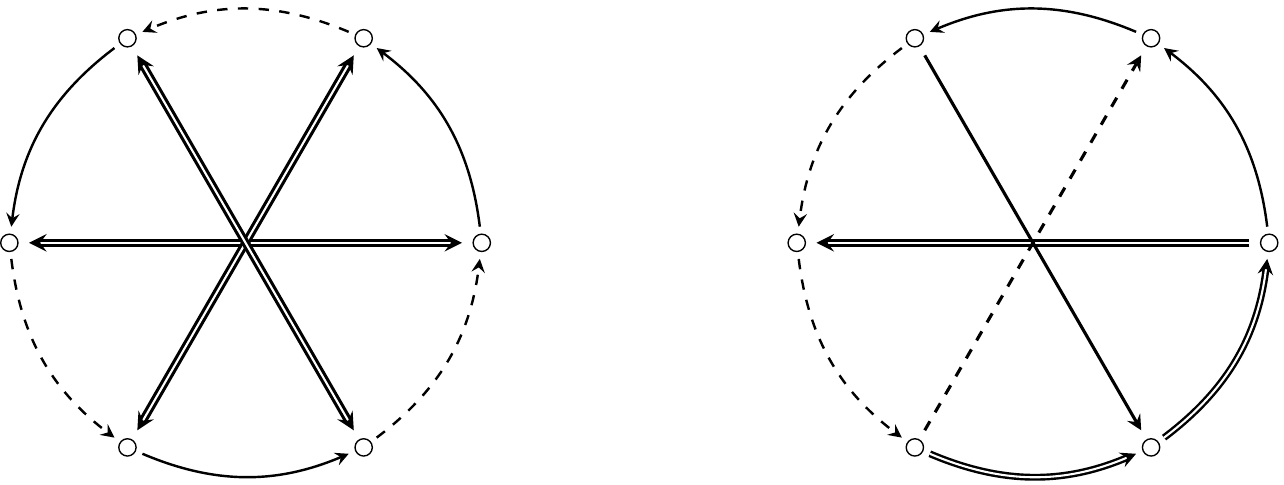}
\caption{On the left, an $F_5$-arc-colored digraph without a kernel by $F_5$-walks.
	On the right, a bipartite tournament without a kernel by monochromatic paths.}
	\label{15obs}
\end{center}
\end{figure}

\begin{cor} \label{cor15}
Let $H$ be a digraph containing $F_1$
or $F_5$ as an induced subdigraph. The kernel
by $H$-walks problem is $NP$-complete, even
when restricted to bipartite digraphs with
circumference $k \ge 6$.
\end{cor}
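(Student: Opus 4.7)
The plan is to specialize the general reduction of Section~\ref{sec:genreduc} by plugging in for the gadget $F$ one of the concrete examples exhibited in Figure~\ref{15obs}, and then to verify that the resulting instance $D_G$ is bipartite with the desired circumference. Concretely, fix an integer $i \in \{1,5\}$ and assume $H$ contains $F_i$ as an induced subdigraph. First I would observe that, because $F_i$ is induced in $H$, any walk whose arcs are colored only with vertices of $F_i$ is an $H$-walk of the ambient construction if and only if it is an $F_i$-walk; in particular, the digraph $F$ from Figure~\ref{15obs} (colored with the vertices of $F_i$) has no kernel by $H$-walks, so it is a legal choice of gadget.

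Next I would fix an even integer $k \ge 6$ and invoke Lemma~\ref{1578red}: the $k$-coloring problem (which remains $NP$-complete for every such $k$) reduces in polynomial time to the kernel by $H$-walks problem on $D_G$. It remains to argue that $D_G$ belongs to the restricted class. For bipartiteness I would enumerate the cycles of $D_G$ according to where they live. The vertex-cycle $C_v$ has length $k$, which is even by choice. Each arc-gadget $Q_{(u,v)i}$ is a directed $4$-cycle. Each copy $F_v$ is bipartite by inspection of Figure~\ref{15obs}. Since $\overrightarrow{G}$ is acyclic and all the connecting arcs $(x_{ui}, x_{(u,v)i})$ and $(w_{(u,v)i}, x_{vi})$ point from ``earlier'' gadgets to ``later'' ones, and since the arcs from $F_v$ to $C_v$ go only in that direction, no cycle of $D_G$ can traverse more than one of these three substructures. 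Hence every cycle of $D_G$ is even, and $D_G$ is bipartite.

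For the circumference, the same case analysis shows that the length of any cycle in $D_G$ is either $k$, or $4$, or bounded by the size of $F$. The two digraphs in Figure~\ref{15obs} are small (in particular of circumference at most $k$ for $k \ge 6$), so the circumference of $D_G$ is exactly $k$. Letting $k$ range over the even integers $\ge 6$ then yields the stated restriction (recall that in a bipartite digraph every cycle has even length, so the parameter $k$ is necessarily even). The main point to be careful with is the first step above, namely checking that the promised obstruction from Figure~\ref{15obs} survives as an obstruction when we upgrade from $F_i$-walks to $H$-walks; this is exactly where the hypothesis that $F_i$ is an \emph{induced} subdigraph of $H$ is used. Everything else is bookkeeping on top of Lemma~\ref{1578red}.
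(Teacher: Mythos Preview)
Your bipartiteness argument has a genuine gap. You reason that, because the arcs from $F_v$ to $C_v$ (and likewise the connecting arcs of the arc gadget) all point one way, no \emph{directed} cycle of $D_G$ can visit more than one substructure, and you conclude from this that $D_G$ is bipartite. But bipartiteness of a digraph means that the \emph{underlying undirected graph} has no odd cycle; the acyclicity of the directed picture is irrelevant. In the construction of Section~\ref{sec:genreduc} \emph{all} arcs from $F_v$ to $C_v$ are present, so any single vertex $a\in V(F_v)$ together with two consecutive vertices $x_{v1},x_{v2}$ of $C_v$ already induces an undirected triangle $a\,x_{v1}\,x_{v2}$. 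Hence $D_G$, as literally defined, is not bipartite no matter how you choose $k$ or how nice $F$ is.

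This is precisely the point the paper's sketch addresses and that your proposal omits: the paper does not assert that the original $D_G$ is bipartite, but that the vertex gadget must be \emph{modified} by keeping only some of the arcs from $F_v$ to $C_v$, selected so as to respect a joint bipartition of $F_v$ and the even cycle $C_v$. (Lemma~\ref{gadgetdifcol} only needs that a vertex of $F_v$ not absorbed inside $F_v$ can still reach $C_v$ by an $H$-walk, so a suitable thinning is harmless.) Your circumference analysis and your remark that the Figure~\ref{15obs} obstruction remains an obstruction because $F_i$ is \emph{induced} in $H$ are both fine; what is missing is this modification step, without which the instance produced by the reduction does not lie in the claimed class.
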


\begin{proof}[Sketch of proof.]
Notice that we can choose the digraph $F_v$
used in the gadget for the vertex $v$ of the
reduction given in Section \ref{sec:genreduc}
to be one of the digraphs in Figure \ref{15obs},
let us call it $F$.   Since both digraphs are
bipartite, if we choose any even integer $k
\ge 6$ in the reduction given in Section
\ref{sec:genreduc}, then we can modify the
construction of the vertex gadget to obtain
a bipartite digraph (we do not need all the
arcs from $F$ to the $k$-cycle, so we can
choose them accordingly to the bipartition
of $F$ and the $k$-cycle).

Notice that the gadget for every arc is a
bipartite digraph, so the resulting digraph
of the construction $D'$ is a bipartite digraph.
Clearly, the circumference of $D'$ is $k$, so
we have our desired result.
\end{proof}

A result analogous for $F_7$ or $F_8$
would be obtained if an $F_i$-arc-colored
bipartite digraph without a kernel by
$F_i$-walks could be found, $i \in \{ 7, 8 \}$.
Moreover, the construction of the digraph
$D'$ in Section \ref{sec:genreduc} is quite
flexible, so results analogous to Corollary
\ref{cor15} with some other restrictions on
the input could be obtained depending
on the properties of those digraphs.   We
think that the following simple problem is
interesting.

\begin{problem}
Find an $F_i$-arc-colored (bipartite) digraph
without a kernel by $F_i$-walks, $i \in \{7, 8\}$.
\end{problem}

It is also worth noticing that having more
information on the structure of $H$ might
yield results restricting even more the
structure of the input digraph.   Consider
the following result.

\begin{thm}
It is $NP$-complete to determine whether
a digraph with $4$-colored arcs has a
kernel by monochromatic paths, even
when restricted to planar digraphs with
$\Delta^+, \Delta^- \le 2$ and $\Delta \le 3$.
\end{thm}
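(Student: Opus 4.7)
Membership in NP is immediate from Algorithm \ref{Hreach}. For the hardness we plan to reduce from the kernel problem, which Fraenkel \cite{F} proved remains NP-complete even when restricted to planar digraphs with $\Delta^-, \Delta^+ \le 2$ and $\Delta \le 3$. Note that when $H$ consists of four isolated looped vertices, an $H$-walk is precisely a monochromatic walk, so kernels by $H$-walks coincide with kernels by monochromatic paths, and we are indeed in the framework of the theorem.

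Given an instance $D$ of the restricted kernel problem, consider its underlying multigraph $G_D$, in which each digon of $D$ becomes two parallel edges. Then $\Delta(G_D) \le 3$, so by Shannon's theorem $G_D$ admits a proper edge coloring with at most $\lfloor 3 \cdot 3/2 \rfloor = 4$ colors, and such a coloring can be computed in polynomial time. Interpret this edge coloring as an arc coloring $c \colon A_D \to \{1,2,3,4\}$ of $D$; by construction, no two arcs of $D$ sharing an endpoint (whether as head or tail) receive the same color.

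The crucial observation is that every monochromatic walk of $(D,c)$ has length exactly one: a walk $(v_0, v_1, v_2)$ with $c(v_0, v_1) = c(v_1, v_2)$ would exhibit two distinct arcs incident to $v_1$ sharing their color, contradicting the properness of $c$. Consequently, the reachability by monochromatic paths relation in $(D,c)$ coincides with the arc relation of $D$, and a subset $S \subseteq V_D$ is a kernel of $D$ if and only if $S$ is a kernel by monochromatic paths of $(D,c)$. Since the reduction does not modify the underlying digraph, planarity together with the degree constraints $\Delta^-, \Delta^+ \le 2$ and $\Delta \le 3$ transfer automatically to the reduced instance, completing the reduction. I expect the only nontrivial step to be the polynomial-time $4$-edge-coloring of $G_D$ in the presence of digons, but constructive proofs of Shannon's bound in this low-degree regime are standard; as a fallback, a digon-subdivision preprocessing reduces to the simple-graph case, where Vizing's theorem gives a $4$-edge-coloring directly, and the subdivided arcs can be re-collapsed after coloring.
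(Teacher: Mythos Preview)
Your proposal is correct and follows essentially the same approach as the paper: reduce from Fraenkel's restricted kernel problem by properly edge-coloring the arcs so that every monochromatic walk has length one, whence kernels and kernels by monochromatic paths coincide. The only difference is that the paper simply invokes Vizing's theorem for the $4$-edge-coloring, whereas you are more careful about possible digons and appeal to Shannon's bound on the associated multigraph; this extra caution is harmless and, if anything, makes the argument cleaner.
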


\begin{proof}[Sketch of proof.]
The reduction is from the kernel problem
in planar graphs with the required degree
constraints.   Let $D$ be an input digraph
for the kernel problem.  By Vizing's
Theorem we can find a proper $4$-coloring
of the arcs of $D$ to obtain a digraph
with $4$-colored set of arcs $D'$.
Clearly, a subset $S$ of $V_D$ is
a kernel of $D$ if and only if it is a
kernel by monochromatic paths of
$D'$.
\end{proof}

Now that the complexity of the kernel by
$H$-walks problem has been completely
classified, what else can be done in order
to find sufficient conditions for the existence
of kernels by $H$-walks?    Of course it is
useless to restrict the digraph $H$, and our
previous remarks on the flexibility on the
construction given in Section
\ref{sec:genreduc} intuitively say that
restricting the structure of the input
digraph $D$ will not be very useful either.
We think that it would be interesting to
restrict the way the arcs of $D$ are colored
with $V_H$; this idea has been previously
explored in \cite{GS3} and it may be worth
considering again.

Probably, the most interesting problem
we can think of right now is the following.

\begin{problem}
Is there a dichotomy for the kernel by
$H$-paths problem?
\end{problem}

It is not hard to verify that most of our
reductions also work for the kernel by
$H$-paths problem. But of course, we
do not have a panchromatic pattern
characterization for the path version
of the problem.   So that would be a
good starting point.

\end{document}